\newcommand{\FF}{\mathbb{F}}
\newcommand{\NN}{\mathbb{N}}
\newcommand{\ZZ}{\mathbb{Z}}
\newcommand{\yp}{Y^{[p]}_2}
\newcommand{\fa}{\mathfrak{a}}
\newcommand{\fb}{\mathfrak{b}}
\newcommand{\fg}{\mathfrak{g}}
\newcommand{\fh}{\mathfrak{h}}
\newcommand{\fl}{\mathfrak{l}}
\newcommand{\fm}{\mathfrak{m}}
\newcommand{\fn}{\mathfrak{n}}
\newcommand{\fp}{\mathfrak{p}}
\newcommand{\ft}{\mathfrak{t}}
\newcommand{\kk}{\mathbbm{k}}
\newcommand{\gln}{\mathfrak{gl}_N}
\newcommand{\gl}{\mathfrak{gl}}
\newcommand{\ip}{\boldsymbol{I}_{\boldsymbol{p}}}
\newcommand{\inn}{\boldsymbol{I}_{\boldsymbol{\mathbb{N}}}}
\newcommand{\zpl}{Z^{{[p]}}(\lambda(u))}
\DeclareMathOperator{\Mod}{mod}
\DeclareMathOperator{\ev}{ev}
\DeclareMathOperator{\GL}{GL}
\DeclareMathOperator{\diag}{diag}
\DeclareMathOperator{\Hom}{Hom}
\DeclareMathOperator{\row}{row}
\DeclareMathOperator{\col}{col}
\DeclareMathOperator{\Lie}{Lie}
\numberwithin{equation}{section}
\newtheorem{Theorem}{Theorem}[section]
\newtheorem{Lemma}[Theorem]{Lemma}
\newtheorem{Corollary}[Theorem]{Corollary}
\newtheorem{Proposition}[Theorem]{Proposition}
\theoremstyle{Theorem}
\newtheorem*{thm*}{Theorem A}
\newtheorem*{thm**}{Theorem B}
\theoremstyle{remark}
\newtheorem{Remark}{Remark}
\newtheorem*{Definition}{Definition}
\newtheorem*{Example}{Example}
\numberwithin{equation}{section}
\begin{document}

\title{Modular representations of the Yangian $Y_2$}

\author{Hao Chang, Jinxin Hu \lowercase{and} Lewis Topley}
\address[H. Chang]{School of Mathematics and Statistics, and Key Laboratory of Nonlinear Analysis \& Applications (Ministry of Education), Central China Normal University, Wuhan 430079, China}
\email{chang@ccnu.edu.cn}
\address[J. Hu]{School of Mathematics and Statistics, and Hubei Key Lab--Math. Sci., Central China Normal University, Wuhan 430079, China}
\email{hjx2021112103@mails.ccnu.edu.cn}
\address[L. Topley]{Department of Mathematical Sciences, University of Bath, Bath, BA2 7AY, UK}
\email{lt803@bath.ac.uk}
%\email{}
\subjclass[2020]{Primary 17B37, 17B50}

\date{\today}

\makeatletter
%\@addtoreset{subabschnitt}{abschnitt}
\makeatother

%\subjclass[2000]{Primary 16G70, Secondary 17B50}

\begin{abstract}
Let $Y_2$ be the Yangian  associated to the general linear Lie algebra $\mathfrak{gl}_2$,
defined over an algebraically
closed field $\kk$ of characteristic $p>0$.
In this paper, we study the representation theory of the restricted Yangian $\yp$.
This leads to a description of the representations of $\mathfrak{gl}_{2n}$,
 whose $p$-character is nilpotent with Jordan type given by a two-row partition $(n, n)$.

\end{abstract}

\maketitle

\section{Introduction} \label{S:Intro}
For each simple finite-dimensional Lie algebra $\fa$ over the field of complex numbers, 
the corresponding Yangian $Y(\fa)$ was defined by Drinfeld in \cite{D85} as a canonical deformation of the universal
enveloping algebra $U(\fa[x])$ for the current Lie algebra $\fa[x]$. 
The Yangian $Y_N:=Y(\gl_N)$ associated to the Lie algebra $\mathfrak{gl}_N$ was earlier considered
in the work of the  St.-Petersburg school of mathematical physicists around 1980; see for instance \cite{TF79}.
Finite dimensional irreducible representations of the Yangians were classified by Drinfeld \cite{D88}.
His classification was preceded by the work of Tarasov \cite{Ta84,Ta85} which dealt with the case $N = 2$, 
see also \cite{Mol98}.
In the current article, we initiate a study of representations of the Yangian $Y_2$,
over an arbitrary field $\kk$ of positive characteristic $p>0$.

Aside from the intrinsic value of making headway in this new research topic, we are motivated by applications to the classical problem of describing representations of the general linear algebra $\gl_{2n}$.
Over the field of complex numbers,
Brundan–Kleshchev \cite {BK06} introduced 
shifted Yangians and their truncated analogues,
which are isomorphic to the {\it finite $W$-algebras} associated to nilpotent orbits in $\gl_N$,
as defined by Premet \cite{Pre02}.
Premet's original motivation for introducing $W$-algebras came from the representation theory of Lie algebras in positive characteristic.
He discovered some remarkable finite dimensional {\it restricted finite $W$-algebras},
which are Morita equivalent to the reduced enveloping algebras of modular reductive Lie algebras.
In \cite{BT18}, Brundan and the third author developed the theory of the shifted Yangian $Y_N(\sigma)$ over $\kk$. 
In particular, they gave a description of the centre $Z(Y_N(\sigma))$ of $Y_N(\sigma)$. One of the key
features which differs from characteristic zero is the existence of a large central subalgebra $Z_p(Y_N(\sigma))$, called the {\it $p$-center}.
Moreover, in their paper \cite{GT19},
the authors observed that Brundan–Kleshchev's isomorphism can be reduced into to positive characteristic,
i.e., the modular finite $W$-algebra is a truncation of the modular shifted Yangian.
Both algebras admit a $p$-centre, 
and the restricted version of Brundan–Kleshchev's isomorphism was established by Goodwin and the third author (see \cite[Theorem 1.1]{GT21}).
Thus Premet's restricted finite $W$-algebra in type {A} can be constructed as a restricted modular shifted Yangian.
Assembling these ingredients,
we may understand the representations of certain reduced enveloping algebras from the representation theory of Yangians.

As a first step towards developing the representation theory of modular Yangian,
we investigate the finite dimensional irreducible representations of the restricted Yangian $\yp$ in detail.
By definition, the {\it restricted Yangian} $\yp:=Y_2/Y_2Z_p(Y_2)_+$ is the quotient of $Y_2$ by the ideal generated by the generators of the $p$-center $Z_p(Y_2)$ (see Subsection \ref{S:Restricted Yangian}).
Fix $n\in\ZZ_{\geq 1}$. Let $\gl_{2n}$ be the general linear Lie algebra consisting of $2n\times 2n$-matrices and $e\in \gl_{2n}$the  $2\times n$-rectangular nilpotent element (see Subsection \ref{Restricted finite W-algebras}).
Our applications will rely on the following isomorphism (\cite[Theorem 1.1]{GT21}):
\begin{align}\label{gtiso}
Y_{2,n}^{[p]}\overset{\sim}{\longrightarrow} U^{[p]}(\gl_{2n},e),
\end{align}
where $Y_{2,n}^{[p]}$ is the restricted truncated Yangian of level $n$ and $U^{[p]}(\gl_{2n},e)$ is the restricted finite $W$-algebra associated to $e$ (see \cite[\S 4]{GT21}).
Recently,  the modular representations of Lie algebras of type $A$ with a two-row nilpotent $p$-central character have been studied under the assumption that the $p \gg 0$ (see \cite{DNY24}), and it would be interesting to compare our work with theirs.

The main results of this article are as follows.
\begin{enumerate}
\setlength{\itemsep}{6pt}
\item Theorem~\ref{Theorem:yp is hopf} shows that the Hopf structure on $Y_2$ descends to $Y_2^{[p]}$. In particular, $\yp$ should be viewed as the first Frobenius kernel of the quantum group $Y_2$.

\item Theorem~\ref{fd irreducible rep} classifies the finite dimensional simple $Y_2^{[p]}$-modules in terms of restricted highest weights;
\item Theorem~\ref{two poly are tensor} reinterprets these modules as certain tensor products of evaluation modules;
\item Theorem~\ref{Drinfeld poly} gives a necessary and sufficient condition for a simple highest weight module to be finite dimensional, in terms of Drinfeld polynomials;
\item Theorem~\ref{T:modgllofa} gives a combinatorial classification of the simple $U_\chi(\gl_{2n})$-modules with $p$-character $\chi$ of Jordan type $(n,n)$, as well as closed formulas for the dimensions of these simple modules.
\end{enumerate}

The article is organised as follows. In Section \ref{S:modular Yangian},
we recall some preliminaries about the modular Yangian $Y_2$ and its $p$-center.
In Section \ref{S:rep of Y2}, we define the baby Verma modules for $\yp$ and prove that 
every finite dimensional irreducible representation is isomorphic to the simple head of some baby Verma module.
We also give the necessary and sufficient condition for an irreducible representation to be finite dimensional. 
In Section \ref{Section:W-algebras}, we recap the theory of finite $W$-algebras, with a view to applications.
Using \eqref{gtiso}, we provide a new description of irreducible $Y_{2,n}^{[p]}$-modules from Levi subalgebras of $\gl_{2n}$.
In Section~\ref{Section:modforREA} go on to determine the irreducible modules for reduced enveloping algebras $U_\chi(\gl_{2n})$
and their dimensions, where $\chi$ is the $p$-character corresponding to $e$.\vspace{4pt}

\emph{Throughout this paper, $\kk$ denotes an algebraically closed field of characteristic $p>0$}.

\bigskip
\noindent
\textbf{Acknowledgment.} The authors would like to thanks Simon Goodwin and Adam Thomas for useful conversations and correspondence,
and for sharing details of their forthcoming work on this topic.
The first author is supported partially by NSFC (Grant Nos. 12071168, 12461005),
the Fundamental Research Funds for the Central Universities (Nos. CCNU22QN002, CCNU24JC001),
China Scholarship Council (No. 202306770031) and Key Laboratory of MEA, Ministry of Education.
He would like to thank Department of Mathematical Sciences at University of Bath
for their hospitality and support.
The third author is supported by a UKRI Future Leaders Fellowship, grant numbers MR/S032657/1, MR/S032657/2, MR/S032657/3.

\section{Restricted Yangian} \label{S:modular Yangian}
In \cite{BT18}, Brundan and the third author developed the theory of the Yangian $Y_N$ over a field of positive
characteristic. We only need here the special case of  $N=2$.

\subsection{Modular Yangian $Y_2$}
The Yangian associated to the general linear Lie algebra $\mathfrak{gl}_2$, denoted by $Y_2$,
is  the associated algebra over $\kk$ with the {\it RTT generators} $\{t_{i,j}^{(r)};~1\leq i,j\leq 2, r>0\}$ subject the following relations:
\begin{align}\label{RTT relations}
\left[t_{i,j}^{(r)}, t_{k,l}^{(s)}\right] =\sum_{t=0}^{\min(r,s)-1}
\left(t_{k, j}^{(t)} t_{i,l}^{(r+s-1-t)}-
t_{k,j}^{(r+s-1-t)}t_{i,l}^{(t)}\right)
\end{align}
for $1\leq i,j,k,l\leq 2$ and $r,s>0$.
By convention, we set $t_{i,j}^{(0)}:=\delta_{i,j}$.
We often put the generators $t_{i,j}^{(r)}$ for all $r\geq 0$ together to form the generating function
\[
t_{i,j}(u):= \sum_{r \geq 0}t_{i,j}^{(r)}u^{-r} \in Y_2[[u^{-1}]].
\]
It is easily seen that, in terms of the generating series,
the initial defining relation (\ref{RTT relations}) may be rewritten as follows:
\begin{align}\label{tiju tklv relation}
(u-v)[t_{i,j}(u),t_{k,l}(v)]=t_{k,j}(u)t_{i,l}(v)-t_{k,j}(v)t_{i,l}(v).
\end{align}

We need another set of generators for  $Y_2$ called {\it Drinfeld generators}. 
To define these, we consider the Gauss factorization $T(u)=F(u)D(u)E(u)$ of the matrix
$$
T(u) := \left(
\begin{array}{cc}
t_{1,1}(u)& t_{1,2}(u) \\
t_{2,1}(u)& t_{2,2}(u)
\end{array}
\right).
$$
This defines power series $d_i(u), e(u), f(u)\in Y_2[[u^{-1}]]$ such that 
$$
D(u)= \left(
\begin{array}{cc}
d_1(u)& 0 \\
0 & d_2(u)
\end{array}
\right),
E(u)= \left(
\begin{array}{cc}
1& e(u) \\
0 & 1
\end{array}
\right),
F(u)= \left(
\begin{array}{cc}
1& 0 \\
f(u) & 1
\end{array}
\right),
$$
Then we have that 
\begin{align}\label{t1122 by d}
t_{1,1}(u)=d_1(u), \, \,  \, \,  t_{2,2}(u)=f(u)d_1(u)e(u)+d_2(u),
\end{align}
\begin{align}\label{t1221 by d}
t_{1,2}(u)=d_1(u)e(u), \, \, \, \,  t_{2,1}(u)=f(u)d_1(u).
\end{align}
The Drinfeld generators are the elements $d_i^{(r)}, e^{(r)}$ and $f^{(r)}$ of $Y_2$ defined from the expansions
$d_i(u)=\sum_{r\geq 0}d_i^{(r)}u^{-r}, e(u)=\sum_{r>0}e^{(r)}u^{-r}$ and $f(u)=\sum_{r>0}f^{(r)}u^{-r}$.
Also define $d_i'^{(r)}$ from the identity $d_i'(u)=\sum_{r\geq 0}d_i'^{(u)}u^{-r}=:d_i(u)^{-1}$.

\begin{Theorem}\cite[Theorem 4.3]{BT18}
The algebra $Y_2$ is generated by the elements $\{d_i^{(r)}, d_i'^{(r)};~1\leq i\leq 2, r>0\}$ and $\{e^{(r)}, f^{(r)};~r>0\}$
subject to the following relations:

\begin{align}\label{di di' relation}
d_i^{(0)}=1, \, \,  \, \, \text{and} \, \,  \, \, \sum\limits_{t=0}^rd_i^{(t)}d_i'^{(r-t)}=\delta_{r0};
\end{align}
\begin{align}\label{di dj commu}
[d_i^{(r)},d_j^{(s)}]=0;
\end{align}
\begin{align}\label{Drinfeld generators relation 1}
[d_i^{(r)},e^{(s)}]=(\delta_{i1}-\delta_{i2})\sum\limits_{t=0}^{r-1}d_i^{(t)}e^{(r+s-1-t)};
\end{align}
\begin{align}\label{Drinfeld generators relation 2}
[d_i^{(r)},f^{(s)}]=(\delta_{i2}-\delta_{i1})\sum\limits_{t=0}^{r-1}f^{(r+s-1-t)}d_i^{(t)};
\end{align}
\begin{align}\label{Drinfeld generators relation 3}
[e^{(r)},f^{(s)}]=-\sum\limits_{t=0}^{r+s-1}d_1'^{(t)}d_{2}^{(r+s-1-t)};
\end{align}
\begin{align}\label{Drinfeld generators relation 4}
[e^{(r)},e^{(s)}]=\sum\limits_{t=1}^{s-1}e^{(t)}e^{(r+s-1-t)}-\sum\limits_{t=1}^{r-1}e^{(t)}e^{(r+s-1-t)};
\end{align}
\begin{align}\label{Drinfeld generators relation 5}
[f^{(r)},f^{(s)}]=\sum\limits_{t=1}^{r-1}f^{(t)}f^{(r+s-1-t)}-\sum\limits_{t=1}^{s-1}f^{(t)}f^{(r+s-1-t)}.
\end{align}
\end{Theorem}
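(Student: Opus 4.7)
The plan is to follow the classical Gauss decomposition strategy of Drinfeld, taking care to verify that the required generating-function manipulations are characteristic-free. The argument splits into three steps: (i) show that the proposed Drinfeld symbols generate $Y_2$; (ii) verify that the listed relations hold; (iii) show that no further relations are needed.

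For (i), the Gauss decomposition $T(u)=F(u)D(u)E(u)$ expresses each entry of $T(u)$ as an explicit polynomial in $d_i(u), d_i(u)^{-1}, e(u), f(u)$ via \eqref{t1122 by d}--\eqref{t1221 by d}. Conversely, these formulas can be inverted recursively: we have $d_1^{(r)}=t_{1,1}^{(r)}$, and then $e^{(r)}, f^{(r)}, d_2^{(r)}$ are obtained one coefficient at a time by comparing powers of $u^{-1}$, so the two generating sets span the same subalgebra of $Y_2$. For (ii), the identities \eqref{di di' relation} are the coefficient-wise expansion of the formal identity $d_i(u)d_i(u)^{-1}=1$. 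The remaining relations \eqref{di dj commu}--\eqref{Drinfeld generators relation 5} are derived by substituting \eqref{t1122 by d}--\eqref{t1221 by d} into the RTT relation \eqref{tiju tklv relation} and manipulating formal series in $u^{-1},v^{-1}$ in the standard way: multiply through by $(u-v)^{-1}$ expanded as a geometric series, and equate coefficients. All such manipulations are combinatorial identities with integer coefficients in the generating functions, so the characteristic zero computation transfers verbatim to our setting.

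The main obstacle lies in (iii). Let $\tilde Y$ denote the associative $\kk$-algebra presented by the Drinfeld symbols $d_i^{(r)}, d_i'^{(r)}, e^{(r)}, f^{(r)}$ modulo the stated relations. Step (ii) furnishes a surjection $\pi\colon\tilde Y\twoheadrightarrow Y_2$, and the task is to show that $\pi$ is injective. The standard approach is to equip $\tilde Y$ with the Kazhdan filtration which assigns degree $r$ to each generator labelled $(r)$, and to use the relations to prove that $\tilde Y$ is spanned by ordered monomials of the form $\prod f^{(r)}\prod d_1^{(r)}\prod d_2^{(r)}\prod e^{(r)}$, taken in some fixed order. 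Combined with the surjectivity of $\pi$ and the PBW theorem for $Y_2$ from the RTT side (established in \cite{BT18}), linear independence of the images of these monomials forces linear independence in $\tilde Y$, whence $\pi$ is injective.

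The hardest component of the argument is the reduction-to-normal-form statement inside $\tilde Y$: namely, that the quadratic relations \eqref{Drinfeld generators relation 1}--\eqref{Drinfeld generators relation 5} are already enough to reorder an arbitrary monomial into the prescribed form modulo terms of strictly smaller Kazhdan degree. This is typically handled by a double induction on the length of the monomial and on the Kazhdan degree, where the inductive step rewrites each ``out of order'' commutator using one of \eqref{Drinfeld generators relation 1}--\eqref{Drinfeld generators relation 5} and then appeals to the inductive hypothesis on the lower-degree remainder. The absence of higher Serre-type relations in rank one is what makes the rank $N=2$ case substantially simpler than the general $Y_N$.
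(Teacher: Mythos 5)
This statement is quoted verbatim from \cite[Theorem 4.3]{BT18} and the paper gives no proof of its own; there is therefore nothing internal to compare your argument against, and the right benchmark is Brundan--Topley's original proof.

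That said, your outline is the standard and correct route. The main thing worth saying is that it does essentially track the strategy of \cite{BT18}: derive the Drinfeld generators from the Gauss factorisation, establish surjectivity, verify that the stated relations hold by substituting \eqref{t1122 by d}--\eqref{t1221 by d} into the RTT relation (all manipulations being characteristic-free because they only involve $\ZZ$-linear combinations of coefficients, including the inversion $d_1(u)^{-1}$ which again has integer-polynomial coefficients), and close the presentation theorem by a spanning-plus-linear-independence argument against a PBW basis. Two small calibration points. First, the PBW theorem you appeal to in step (iii) is \cite[Theorem 4.14]{BT18}, which is the PBW basis in the \emph{Drinfeld} generators, not the RTT generators; what makes the logic non-circular is that PBW in Drinfeld generators is proved by an associated-graded comparison with $U(\gl_N[t])$ and does not presuppose that the Drinfeld relations are defining, so you can safely invoke it. Second, your step (ii) and the normal-form reduction in (iii) are asserted rather than checked; in the $N=2$ case the absence of Serre relations does make the straightening routine with \eqref{Drinfeld generators relation 1}--\eqref{Drinfeld generators relation 5} genuinely finite, but this is exactly the part that carries the content of the theorem, and a referee would expect at least the shape of the double induction (on Kazhdan degree and monomial length) to be written out. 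As a sketch of the argument one would actually produce, it is sound; as submitted it is an outline, not a proof.
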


For any power series $f(u)\in 1+u^{-1}\kk[[u^{-1}]]$,
it follows from the defining relation \eqref{tiju tklv relation} that there is an automorphism defined via
\begin{align}\label{auto definition}
\mu_f:Y_2\rightarrow Y_2;~t_{i,j}(u)\mapsto f(u)t_{i,j}(u).
\end{align}
Thanks to \eqref{t1122 by d} and \eqref{t1221 by d}, on Drinfeld generators
we have
\begin{align}\label{auto:Drinfeld generators}
\mu_f(d_i(u))=f(u)d_i(u), \mu_f(e_i(u))=e_i(u)~\text{and}~\mu_f(f_i(u))=f_i(u).
\end{align}

The following is the {\it PBW theorem} for $Y_2$.
\begin{Theorem}\cite[Theorem 4.14]{BT18}
Ordered monomias in the elements
\begin{align}\label{pbwbasis}
\{d_i^{(r)};~1\leq i\leq 2, r>0\}\cup\{e^{(r)}, f^{(r)};~r>0\}
\end{align}
taken in any fixed ordering form a basis for $Y_2$.
\end{Theorem}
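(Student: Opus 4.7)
The plan is to deduce the Drinfeld PBW theorem from the RTT PBW theorem for the $t_{i,j}^{(r)}$ (proved earlier in \cite{BT18}) via a triangular change of variables in an appropriate filtration. Equip $Y_2$ with the \emph{loop filtration} $F_\bullet Y_2$, in which $t_{i,j}^{(r)}$ has degree $r$. Inspecting the RTT relation \eqref{RTT relations}, the right-hand side lies in $F_{r+s-1}Y_2$ while the left-hand side a priori lies in $F_{r+s}Y_2$, so the associated graded $\gr_F Y_2$ is commutative. Combined with the RTT PBW theorem, this yields an isomorphism
\begin{equation*}
\gr_F Y_2 \;\cong\; \kk\bigl[\bar{t}_{i,j}^{(r)} \,:\, 1\le i,j\le 2,\ r>0\bigr].
\end{equation*}

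Next, I would use the Gauss factorisation \eqref{t1122 by d}, \eqref{t1221 by d} to compare the two generating sets. Induction on $r$, using $d_1^{(r)} = t_{1,1}^{(r)}$ together with the recursions $e^{(r)} = t_{1,2}^{(r)} - \sum_{a=1}^{r-1} d_1^{(a)} e^{(r-a)}$, $f^{(r)} = t_{2,1}^{(r)} - \sum_{a=1}^{r-1} f^{(r-a)} d_1^{(a)}$, and $d_2^{(r)} = t_{2,2}^{(r)} - \sum_{a+b+c=r,\ a,c>0} f^{(a)} d_1^{(b)} e^{(c)}$, shows each Drinfeld generator lies in $F_r Y_2$ and that the symbols satisfy $\bar d_1^{(r)} = \bar t_{1,1}^{(r)}$, $\bar e^{(r)} \equiv \bar t_{1,2}^{(r)}$, $\bar f^{(r)} \equiv \bar t_{2,1}^{(r)}$, and $\bar d_2^{(r)} \equiv \bar t_{2,2}^{(r)}$, all modulo polynomials in the $\bar t_{k,l}^{(s)}$ with $s<r$. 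Hence $\gr_F Y_2$ is also a polynomial algebra, now on the symbols $\{\bar d_i^{(r)}, \bar e^{(r)}, \bar f^{(r)}\}$, which form an algebraically independent generating set by unipotent triangularity.

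To conclude, fix any total order on the generators \eqref{pbwbasis}. Each ordered monomial in the Drinfeld generators has leading symbol equal to the corresponding commutative monomial in the polynomial generators of $\gr_F Y_2$, so distinct ordered monomials have linearly independent symbols; the standard lifting argument for exhaustive, separated filtrations transfers linear independence back to $Y_2$. Spanning is verified using the relations \eqref{di di' relation}--\eqref{Drinfeld generators relation 5}, by induction on total loop degree, to straighten any product of Drinfeld generators into an ordered monomial.

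The principal obstacle is the combinatorial bookkeeping for the change of variables at $d_2^{(r)}$, where $f(u)d_1(u)e(u)$ contains contributions of the same loop degree $r$ as $t_{2,2}^{(r)}$ itself; one must verify that after substituting the inductively known symbols of $e^{(a)}$ and $f^{(a)}$ into the cross terms $\bar f^{(a)} \bar d_1^{(b)} \bar e^{(c)}$, only polynomials in strictly smaller-degree symbols appear, preserving unipotent triangularity. A secondary concern is termination of the straightening procedure in positive characteristic, but this follows from induction on loop degree since each commutator in \eqref{Drinfeld generators relation 1}--\eqref{Drinfeld generators relation 5} strictly lowers the loop degree while reordering adjacent factors.
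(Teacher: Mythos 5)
Your proof is correct and follows essentially the same strategy as the cited reference \cite[Theorem 4.14]{BT18}, which the paper quotes without reproving: pass to the loop filtration with $\deg t_{i,j}^{(r)} = r$, identify $\gr_F Y_2$ with a polynomial ring via the RTT PBW theorem, and check that the Gauss-decomposition change of variables is unitriangular in the $r$-index so that the Drinfeld symbols also freely generate $\gr_F Y_2$. Your noted concern about the $d_2^{(r)}$ cross terms lying in the same filtration degree resolves exactly as you indicate, since the triangularity needed is with respect to the index $r$ rather than the filtration degree, and $\gr_F Y_2$ being an integral domain guarantees nonvanishing of leading symbols of ordered monomials.
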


\subsection{Restricted Yangian $\yp$} \label{S:Restricted Yangian}
We proceed to recall the description of the $p$-central elements of $Y_2$ given in \cite{BT18}.
For $i=1,2$, we define
\begin{align}\label{biu}
b_i(u)=\sum\limits_{r\geq 0}b_i^{(r)}u^{-r}:=d_i(u)d_i(u-1)\cdots d_i(u-p+1).
\end{align}
By \cite[Theorem 5.11(2)]{BT18} the elements in
\begin{align}\label{p-center elements}
\{b_i^{(rp)};~1\leq i\leq 2, \, \,  r > 0\}
\cup\{(e^{(r)})^p,(f^{(r)})^p;~ r>0\}
\end{align}
are algebraically independent, and lie in the center $Z(Y_2)$ of $Y_2$.
The subalgebra they generated is called {\it $p$-center} of $Y_2$ and is denoted by $Z_p(Y_2)$.
In terms of diagonal Drinfeld generators, we define a power series by the rule
\begin{align*}
c(u)=\sum\limits_{r\geq 0}c^{(r)}u^{-r}:=d_1(u)d_2(u-1)\in Y_2[[u^{-1}]].   
\end{align*}
The algebra genrated by the coefficients $\{c^{(r)};~r>0\}$ will be denoted $Z_{\rm HC}(Y_2)$.
We call it the {\it Harish-Chandra center} of $Y_2$.
The center $Z(Y_2)$ is generated by $Z_{\rm HC}(Y_2)$ and $Z_p(Y_2)$ (\cite[Theorem 5.11]{BT18}).
According to \cite[Corollary 5.13]{BT18},
the Yangian $Y_2$ is free as a module over $Z_p(Y_2)$ with basis given by the ordered monomials in the generators in \eqref{pbwbasis} in which no exponent is $p$ or more,
we refer to such monomials as {\it $p$-restricted monomials}.

We let $Z_p(Y_2)_+$ be the ideal of $Z_p(Y_2)$ generated by the elements given in \eqref{p-center elements}. Since the generators \eqref{p-center elements} are algebraically independent it follows that $Z_p(Y_2)$ is a polynomial ring and every element of $\Hom_{\kk\operatorname{-alg}}(Z_p(Y_2), \kk)$ is given by specifying a scalar for each generator \eqref{p-center elements}. The kernels of such homomorphisms are maximal ideals, although if $\kk$ is countable then not every maximal ideal is the kernel of such a map. In any case, $Z_p(Y_2)_+ \unlhd Z_p(Y_2)$ is a maximal ideal.

The {\it restricted Yangian} is defined by $\yp:=Y_2/Y_2Z_p(Y_2)_+$ (see \cite[\S 4.3]{GT21}).
Clearly, the images in $\yp$ of the $p$-restricted monomials in the Drinfeld generators of $Y_2$ form a basis of $\yp$.
When working with $\yp$ we often abuse notation by using the same symbols $d_i^{(r)}, e^{(r)}, f^{(r)}, t_{i,j}^{(r)}$ to refer to the elements
of $Y_2$ and their images in $\yp$.
\begin{Remark}\label{remark1}
In fact, the elements $\{b_i^{(r)};~i=1,2, r>0\}$ also belong to the $p$-center $Z_p(Y_2)$ (see \cite[Lemma 5.7, Theorem 5.8]{BT18}).
Hence $b_i(u)$ is equal to $1$ in $\yp[[u^{-1}]]$.
\end{Remark}

\subsection{Hopf algebra structure}
It is well-known that $Y_2$ is a Hopf algebra (see for instance \cite[Theorem 1.5.1]{Mol07}).
Its comultiplication $\Delta$ and antipode $S$ are given by
\begin{align}\label{hopfstructure of y2}
\Delta(t_{i,j}(u))=\sum\limits_{k=1}^2t_{i,k}(u)\otimes t_{k,j}(u),~\ \ \ S(t_{i,j}(u))=t_{i,j}'(u),
\end{align}
where $t_{i,j}'(u)$ is the $(i,j)$-entry of the matrix $T(u)^{-1}$.
The counit sends $t_{i,j}(u)\mapsto \delta_{i,j}$.
Now, we denote by $I_p:=Y_2Z_p(Y_2)_+$ the ideal defined in Subsection \ref{S:Restricted Yangian}.
In this section, we shall prove that $I_p$ is a Hopf ideal of $Y_2$.
Then the restricted Yangian $\yp$ inherits from $Y_2$ the Hopf algebra structure.

\begin{Lemma}\label{useful equality}
The following identities hold in $Y_2[[u^{-1}]]$.
\begin{itemize}
\setlength{\itemsep}{6pt}
\item[(1)] $t_{1,1}(u-1)t_{1,2}(u)=t_{1,2}(u-1)t_{1,1}(u),$
\item[(2)] $t_{1,1}(u)t_{2,1}(u-1)=t_{2,1}(u)t_{1,1}(u-1),$
\item[(3)] $(k+1)t_{1,2}(u-k)t_{1,1}(u)=kt_{1,1}(u)t_{1,2}(u-k)+t_{1,2}(u)t_{1,1}(u-k)$ for $k\in\ZZ_{\geq 0}$.
\end{itemize}
\end{Lemma}
\begin{proof}
These follow from the defining relation \eqref{tiju tklv relation}. 
For example, to get the third relation, set $(i,j,k,l)=(1,2,1,1)$
and $u:=v-k$ in \eqref{tiju tklv relation}, simplify, then replace $v$ by $u$.
\end{proof}

\begin{Lemma}\label{inductionlemma1}
For any positive integer $n$ and $1\leq i\leq n$, we have that
\begin{eqnarray}\label{inductionlemma-formula}
& & \nonumber \binom{n+1}{i}t_{1,2}(u-n)t_{1,1}(u-i+1)\cdots t_{1,1}(u)\\
& & \hspace{40pt} = \binom{n}{i}t_{1,1}(u-i+1)\cdots t_{1,1}(u)t_{1,2}(u-n)\\\nonumber
& & \hspace{80pt} +\binom{n}{i-1}t_{1,2}(u-i+1)t_{1,1}(u-i+2)\cdots t_{1,1}(u)t_{1,1}(u-n).
\end{eqnarray}
\end{Lemma}
\begin{proof}
We work by induction on $n$.
If $n=1$ Lemma \ref{useful equality}(3) implies that
\[2t_{1,2}(u-1)t_{1,1}(u)=t_{1,1}(u)t_{1,2}(u-1)+t_{1,2}(u)t_{1,1}(u-1),\]
which is clearly true. 
Applying again Lemma \ref{useful equality}(3), we know that
\eqref{inductionlemma-formula} always holds for $i=1$.
So assume that $n\geq 2, i\geq 2$ and the statement is true for $n-1$. It follows that
\begin{eqnarray*}
& &\binom{n}{i-1}t_{1,2}(u-n+1)t_{1,1}(u-i+2)\cdots t_{1,1}(u)\\\nonumber
& & \hspace{40pt}=\binom{n-1}{i-1}t_{1,1}(u-i+2)\cdots t_{1,1}(u)t_{1,2}(u-n+1)\\\nonumber
& & \hspace{80pt}+\binom{n-1}{i-2}t_{1,2}(u-i+2)t_{1,1}(u-i+3)\cdots t_{1,1}(u)t_{1,1}(u-n+1).
\end{eqnarray*}
Replacing $u$ by $u-1$ and multiplying on the both right sides by $(n+1)t_{1,1}(u)$ gives
\begin{eqnarray*}
& & (n+1)\binom{n}{i-1}t_{1,2}(u-n)t_{1,1}(u-i+1)\cdots t_{1,1}(u-1)t_{1,1}(u)\\
& & \hspace{40pt} =(n+1)\binom{n-1}{i-1}t_{1,1}(u-i+1)\cdots t_{1,1}(u-1)t_{1,2}(u-n)t_{1,1}(u)\\
& & \hspace{80pt} =(n+1)\binom{n-1}{i-2}t_{1,2}(u-i+1)t_{1,1}(u-i+2)\cdots t_{1,1}(u-1)t_{1,1}(u-n)t_{1,1}(u).
\end{eqnarray*}
Compute using Lemma \ref{useful equality}:
\begin{eqnarray*}
& & (n+1)\binom{n}{i-1}t_{1,2}(u-n)t_{1,1}(u-i+1)\cdots t_{1,1}(u-1)t_{1,1}(u)\\
& & \hspace{20pt} = n\binom{n-1}{i-1}t_{1,1}(u-i+1)\cdots t_{1,1}(u-1)t_{1,1}(u)t_{1,2}(u-n)\\
 & & \hspace{40pt} + \left(\binom{n-1}{i-1}+(n+1)\binom{n-1}{i-2}\right)t_{1,2}(u-i+1)t_{1,1}(u-i+2)\cdots t_{1,1}(u-1)t_{1,1}(u-n)t_{1,1}(u).
\end{eqnarray*}
Simplifying the above, we obtain \eqref{inductionlemma-formula}.
\end{proof}

\begin{Proposition}\label{prop image of delta}
For any positive integer $n$, we have
\begin{eqnarray}\label{delta mul1}
& & \nonumber \Delta(t_{1,1}(u)t_{1,1}(u-1)\cdots t_{1,1}(u-n+1))\\
& & \hspace{20pt} = \sum\limits_{i=0}^n\binom{n}{i}(t_{1,2}(u-n+1)\cdots t_{1,2}(u-i)t_{1,1}(u-i+1)\cdots t_{1,1}(u)\\\nonumber
& & \hspace{60pt} \otimes t_{1,1}(u)\cdots t_{1,1}(u-i+1)t_{2,1}(u-i)\cdots t_{2,1}(u-n+1)).\\
& & \nonumber \\
%\end{eqnarray}
%\begin{eqnarray}
\label{delta mul2}
& & \nonumber \Delta(t_{1,2}(u)t_{1,2}(u-1)\cdots t_{1,2}(u-n+1))\\
& & \hspace{20pt} =\sum\limits_{i=0}^n\binom{n}{i}(t_{1,2}(u-n+1)\cdots t_{1,2}(u-i)t_{1,1}(u-i+1)\cdots t_{1,1}(u)\\\nonumber
& & \hspace{60pt} \otimes t_{1,2}(u)\cdots t_{1,2}(u-i+1)t_{2,2}(u-i)\cdots t_{2,2}(u-n+1)).
\end{eqnarray}
\end{Proposition}
\begin{proof}
To establish \eqref{delta mul1}, we proceed by induction on $n$. 
The case $n=1$ immediately follows from \eqref{hopfstructure of y2}.
If $n=2$, then the claim follows from Lemma \ref{useful equality}, by a straightforward computation.
To complete the proof, it suffices to verify the following, for $1\leq i\leq n$:
\begin{eqnarray*}
& & \binom{n+1}{i}(t_{1,2}(u-n)\cdots t_{1,2}(u-i)t_{1,1}(u-i+1)\cdots t_{1,1}(u)\\\nonumber
& & \hspace{20pt} \otimes t_{1,1}(u)\cdots t_{1,1}(u-i+1)t_{2,1}(u-i)\cdots t_{2,1}(u-n))\\
& & \hspace{20pt} =\binom{n}{i}(t_{1,2}(u-n+1)\cdots t_{1,2}(u-i)t_{1,1}(u-i+1)\cdots t_{1,1}(u)t_{1,2}(u-n)\\\nonumber
& & \hspace{40pt} \otimes t_{1,1}(u)\cdots t_{1,1}(u-i+1)t_{2,1}(u-i)\cdots t_{2,1}(u-n+1)t_{2,1}(u-n))\\
& & \hspace{60pt} +\binom{n}{i-1}(t_{1,2}(u-n+1)\cdots t_{1,2}(u-i+1)t_{1,1}(u-i+2)\cdots t_{1,1}(u)t_{1,1}(u-n)\\\nonumber
& & \hspace{80pt} \otimes t_{1,1}(u)\cdots t_{1,1}(u-i+2)t_{2,1}(u-i+1)\cdots t_{2,1}(u-n+1)t_{1,1}(u-n)).\\
\end{eqnarray*}
By Lemma \ref{useful equality}(1), the right sides of the above tensor products are equal, using the fact that the elements $\{t_{1,2}^{(r)};~r>0\}$ commute (cf. \cite[(4.2)]{BT18}, for example). It remains to show that 
\begin{eqnarray*}
 & & \binom{n+1}{i}t_{1,2}(u-n)\cdots t_{1,2}(u-i)t_{1,1}(u-i+1)\cdots t_{1,1}(u)\\\nonumber
& & \quad = \binom{n}{i}t_{1,2}(u-n+1)\cdots t_{1,2}(u-i)t_{1,1}(u-i+1)\cdots t_{1,1}(u)t_{1,2}(u-n)\\\nonumber
& & \quad \quad +\binom{n}{i-1}t_{1,2}(u-n+1)\cdots t_{1,2}(u-i+1)t_{1,1}(u-i+2)\cdots t_{1,1}(u)t_{1,1}(u-n).
\end{eqnarray*}
Now Lemma \ref{inductionlemma1} yields the desired conclusion.
Equation \eqref{delta mul2} can be proved in the same fashion.
\end{proof}

We need another description of the $p$-center of $Y_2$ in
terms of the RTT generators.
Let 
\[
s_{i,j}(u)=\sum\limits_{r\geq 0}s_{i,j}^{(r)}u^{-r}:=t_{i,j}(u)t_{i,j}(u-1)\cdots t_{i,j}(u-p+1)\in Y_2[[u^{-1}]].
\]
According to \cite[Theorem 6.9]{BT18}, 
the $p$-center $Z_p(Y_2)$ is generated by $\{s_{i,j}^{(r)};~1\leq i,j \leq2, r>0\}$.

\begin{Corollary}\label{deltaIp=Ip}
We have 
\[\Delta(s_{i,j}(u))=\sum\limits_{k=1}^2s_{i,k}(u)\otimes s_{k,j}(u).\]
In particular,
$\Delta(I_p)\subseteq I_p\otimes Y_2+Y_2\otimes I_p.$
\end{Corollary}
\begin{proof}
For each permutation $\omega\in S_2$, there is an automorphism $\omega:Y_2\rightarrow Y_2$ sending $t_{i,j}^{(r)}\mapsto t_{\omega(i),\omega(j)}^{(r)}$ (easily seen by inspecting the RTT presentation \eqref{RTT relations}). Moreover, it sends $s_{i,j}(u)$ to $s_{\omega(i),\omega(j)}(u)$. We reduce to computing $\Delta(s_{1,1}(u))$ and $\Delta(s_{1,2}(u))$.
Since we are in characteristic $p$, 
Proposition \ref{prop image of delta} confirms the claim.
\end{proof}

Now we need to determine $S(s_{i,j}(u))$, where $S$ is the antipode of $Y_2$ \eqref{hopfstructure of y2}.
Recall that $t_{i,j}'(u)$ is the $(i,j)$-entry of the matrix $T(u)^{-1}$.
We introduce one more family of elements, we let
\begin{align}\label{def:s'ij}
s'_{i,j}(u)=\sum\limits_{r\geq 0}s_{i,j}'^{(r)}u^{-r}:=t'_{i,j}(u)t_{i,j}'(u-1)\cdots t_{i,j}'(u-p+1)\in Y_2[[u^{-1}]].
\end{align}

\begin{Proposition}\label{Sip=ip}
All of the elements $s_{i,j}'^{(r)}$ belong to the $p$-center $Z_p(Y_2)$.
In particular $S(I_p)\subseteq I_p$.
\end{Proposition}
\begin{proof}
Similar to the proof Corollary \ref{deltaIp=Ip},
using the conjugation automorphism $\omega$ which sends $s_{i,j}(u)$ to $s_{\omega(i),\omega(j)}(u)$,
we reduce to proving that all the coefficients of $s'_{2,2}(u)$ and of $s'_{2,1}(u)$ are central.
Since $S$ is an anti-automorphism (\cite[Proposition 1.3.3]{Mol07}),
\eqref{di dj commu} implies that
\[
S(s_{2,2}(u))=S(t_{2,2}(u-p+1))\cdots S(t_{2,2}(u))=t'_{2,2}(u)t'_{2,2}(u-1)\cdots t'_{2,2}(u-p+1).
\]
We write $\tilde{d}_2(u):=d_2(u)^{-1}$ to ease notation,
and the Gauss decomposition yields $t'_{2,2}(u)=\tilde{d}_2(u)$.
Thus, $S(s_{2,2}(u))=s_{2,2}'(u)=b_2(u)^{-1}$.
Now \cite[Theorem 5.11(2)]{BT18} shows that $s'_{2,2}(u)\in Z_p(Y_2)[[u^{-1}]]$.
Again, the Gauss decomposition implies that $t'_{2,1}(u)=-\tilde{d}_2(u)f(u)$.
For all $m\geq 1$, we claim that
\begin{eqnarray}
\label{delicious sandwiches}
\tilde{d}_2(u)\tilde{d}_2(u+1)\cdots \tilde{d}_2(u+m-1) f(u)=f(u+m)\tilde{d}_2(u)\tilde{d}_2(u+1)\cdots \tilde{d}_2(u+m-1).
\end{eqnarray}
To prove it, we set $v=u+m$ in \cite[(4.52)]{BT18} to deduce that
\[
e(u+m)d_2(u)d_2(u+1)\cdots d_2(u+m-1)=d_2(u)d_2(u+1)\cdots d_2(u+m-1)e(u).
\]
Let $\tau:Y_2\rightarrow Y_2$ be the anti-automorphism defined by $\tau(t_{i,j}^{(r)}):=t_{j,i}^{(r)}$ (see \cite[Section 4.5]{BT18}).
In particular, on Drinfeld generators, $\tau(d_i(u))=d_i(u), \tau(e(u))=f(u)$ and $\tau(f(u))=e(u)$.
Combining this with \eqref{di dj commu}, we obtain
\[
d_2(u)d_2(u+1)\cdots d_2(u+m-1)f(u+m)=f(u)d_2(u)d_2(u+1)\cdots d_2(u+m-1).
\]
This establishes \eqref{delicious sandwiches}. Since the elements $\{t_{1,2}^{(r)};~r>0\}$ commute, we have
\begin{eqnarray*}
S(s_{2,1}(u))&=&t'_{2,1}(u)t'_{2,1}(u-1)\cdots t'_{2,1}(u-p+1)=s'_{2,1}(u)\\
&=&(-1)^p\tilde{d}_2(u)f(u)\tilde{d}_2(u-1)f(u-1)\cdots \tilde{d}_2(u-p+1)f(u-p+1)\\
& = &-\tilde{d}_2(u)\tilde{d}_2(u-1)\cdots \tilde{d}_2(u-p+1)f(u-p+1)^p\\
&=&-b_2(u)^{-1}f(u-p+1)^p,
\end{eqnarray*}
where the third equality uses \eqref{delicious sandwiches}. The foregoing observations in conjunction with \cite[Theorem 5.4]{BT18} yield $s'_{2,1}(u)\in Z_p(Y_2)[[u^{-1}]]$.
\end{proof}

\begin{Theorem}\label{Theorem:yp is hopf}
The ideal $I_p$ is a Hopf ideal of $Y_2$. 
In particular, $\yp$ inherits from $Y_2$ the Hopf algebra structure.
\end{Theorem}
\begin{proof}
 This is a direct consequence of Corollary \ref{deltaIp=Ip} and Propostion \ref{Sip=ip}.
\end{proof}

\subsection{Reduced enveloping algebras}\label{sec:reduced enveloping alg}
Recall that a map $\phi : U \to V$ of $\kk$-vector spaces is called {\it $p$-semilinear} if $\phi(\lambda x + \mu y) = \lambda^p \phi(x) + \mu^p \phi(y)$ for $\lambda, \mu \in \kk$ and $x,y \in U$.

A {\it restricted Lie algebra} is a Lie algebra $\fl$ with a map $\fl\rightarrow \fl$
denoted $x\mapsto x^{[p]}$, such that $x\mapsto x^p-x^{[p]}$ is a $p$-semilinear map $\fl \to Z(\fl)$
to the center of the enveloping algebra $U(\fl)$. The map $x \mapsto x^{[p]}$ is called the $p$-mapping, and the {\it $p$-center} of $U(\fl)$ is defined to be the subalgebra $Z_p(\fl)$ of $Z(\fl)$ generated by $\{x^p-x^{[p]};~x\in\fl\}$.

Given $\chi\in\fl^*$, we define $J_\chi$ to be the ideal of $U(\fl)$ generated by $\{x^p-x^{[p]}-\chi(x)^p;~x\in\fl\}$,
and the {\it reduced enveloping algebra corresponding to $\chi$} to be $U_{\chi}(\fl):=U(\fl)/J_{\chi}$.
For  $\chi=0$ one usually calls $U_0(\fl)$ the {\it restricted enveloping algebra} of $\fl$ (see \cite[\S 2.7]{Jan97}).

Let $N\in\mathbb{Z}_{\geq 1}$ and $\gln$ be the general linear Lie algebra,
which is spanned by the matrix units $\{e_{i,j};~1\leq i,j \leq N\}$.
Then $\gln$ is a restricted Lie algebra with $\GL_N$-equivariant $p$-mapping $\gln\rightarrow\gln;~x\mapsto x^{[p]}$ where $x^{[p]}$ denotes the $p$th matrix power of $x\in\gln$.
In particular, we note that $e_{i,j}^{[p]}=\delta_{ij}e_{i,j}$ for $1\leq i,j\leq N$.
For the case $N=2$,
the {\it evalutation homomorphism}
\begin{align}\label{ev hom}
\ev: t_{i,j}(u)\mapsto\delta_{i,j}+e_{i,j}u^{-1}
\end{align}
defines a surjective homomorphism $Y_2\rightarrow U(\mathfrak{gl}_2)$.
Therefore any representation of Lie algebra $\mathfrak{gl}_2$ can be regarded as a representation of $Y_2$ by pullback,
and any irreducible representation of $\mathfrak{gl}_2$ remains irreducible over $Y_2$.
Moreover, by \cite[Theorem 1.1]{GT21} the homomorphism $\pi$ induces an surjective algebra homomorphism
\begin{align}\label{pinp}
\ev^{[p]}:Y_2^{[p]}\twoheadrightarrow U_0(\mathfrak{gl}_2).
\end{align}
By the same token,
the homomorphism \eqref{pinp} allows us to equip any $U_0(\mathfrak{gl}_2)$-module with a structure of $\yp$-module.

\section{Representations of $\yp$}\label{S:rep of Y2}
In this section, we study the representation of the restricted Yangian $\yp$.
To describe the finite dimensional irreducible modules of $\yp$,
we need to construct analogues of highest weight representations in characteristic $0$. This should be compared to \cite[\textsection 10]{Jan97}.

\subsection{Baby Verma modules}
We first give some notation for the PBW basis of $\yp$.
Let
\[
\inn:=\{(i_1,i_2,\cdots);~i_k\in\ZZ_{\geq 0}~\text{and only
finitely many are nonzero}\}
\]
and $|I|:=\sum_{k\geq 1} i_k$ for $I\in\inn$.
Given $I=(i_1,i_2,\cdots)\in\inn$. Set
\[
f^I:=\prod_{r>0}(f^{(r)})^{i_r}\in\yp
\]
and similarly we define
elements $d_1^I,d_2^I$ and $e^I$.
For $I=(i_1,i_2,\cdots)\in\inn$, note that $e^I$ and $f^I$ are zero if some $i_k\geq p$.
Consider the subset of $\inn$
\[
\ip:=\{(i_1,i_2,\cdots)\in\inn;~0\leq i_k<p\}.
\]
So that
\begin{align}\label{pbw basis of yp2}
\{f^{I_1}d_1^{I_2}d_2^{I_3}e^{I_4};~I_1,I_2,I_3,I_4\in\ip\}
\end{align}
is the PBW basis of $\yp$ (see Section \ref{S:Restricted Yangian}).

Let $Y_2^{[p],-}$ denote the subalgebra of $\yp$ generated by all the $f$'s. We note that 
$Y_2^{[p], -}$ is non-negatively graded with all generators in degree 1, thanks to \eqref{Drinfeld generators relation 5} It follows that there is a unique maximal graded ideal $(Y_2^{[p],-})_+$ generated by the elements $\{f^{(r)};~r>0\}$.

\begin{Lemma}\label{elementnilpotent}
For any $x\in(Y_2^{[p],-})_+$, there exists a positive integer $n$ such that $x^n=0$.
\end{Lemma}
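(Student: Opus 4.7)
Plan:

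The strategy exploits the relation $(f^{(j)})^p = 0$ in $\yp$ together with the observation that the $f$-$f$ commutation relation preserves the total number of $f$-factors in a product. I aim to show directly that $x^n = 0$ for every sufficiently large $n$.

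Since $x$ is a finite $\kk$-linear combination of PBW monomials and $x \in (Y_2^{[p],-})_+$, I may write $x = \sum_I c_I f^I$ where the sum is finite, each $I \in \ip$ satisfies $|I| \geq 1$, and some fixed $r \geq 1$ bounds all indices appearing (so $i_j = 0$ for $j > r$ in every $I$ occurring). Expanding gives
\[
x^n = \sum_{I_1, \ldots, I_n} c_{I_1}\cdots c_{I_n}\, f^{I_1} f^{I_2}\cdots f^{I_n},
\]
and each summand is a product of $\sum_l |I_l| \geq n$ factors from $\{f^{(1)}, \ldots, f^{(r)}\}$. It therefore suffices to prove that any product $f^{(j_1)} f^{(j_2)} \cdots f^{(j_m)}$ with $j_l \in \{1, \ldots, r\}$ and $m > r(p-1)$ vanishes in $\yp$.

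The key observation is that relation (\ref{Drinfeld generators relation 5}) simplifies, for $1 \leq a < b$, to
\[
[f^{(a)}, f^{(b)}] = -\sum_{t=a}^{b-1} f^{(t)} f^{(a+b-1-t)},
\]
obtained by cancelling the common initial terms of the two sums; both indices on the right lie in $\{a, \ldots, b-1\}$. Applying the standard PBW straightening procedure to $f^{(j_1)} \cdots f^{(j_m)}$, each elementary swap $f^{(a)} f^{(b)} = f^{(b)} f^{(a)} + [f^{(a)}, f^{(b)}]$ replaces a two-factor subword by another sum of two-factor subwords, so both the total number of factors and the property of having all indices in $\{1, \ldots, r\}$ are preserved throughout. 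Hence the reduction in $\yp$ expresses $f^{(j_1)} \cdots f^{(j_m)}$ as a $\kk$-linear combination of PBW basis monomials $f^J$ with $|J| = m$ and $J_j = 0$ for $j > r$. However, the PBW basis of $\yp$ requires $J_j < p$ for every $j$, so any such nonzero $f^J$ must satisfy $|J| \leq r(p-1) < m$, a contradiction. Consequently, $f^{(j_1)} \cdots f^{(j_m)} = 0$ in $\yp$, and one may take $n = r(p-1) + 1$.

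The main obstacle is really just the careful bookkeeping to confirm that the straightening procedure strictly preserves the ``size'' $m$ of a product and never introduces indices larger than $r$; all other ingredients, namely the commutator formula and the identity $(f^{(j)})^p = 0$, are supplied directly by the preceding exposition.
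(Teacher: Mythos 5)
Your proof is correct and follows essentially the same strategy as the paper's: use the homogeneity of relation \eqref{Drinfeld generators relation 5} to see that straightening a product of $m$ Drinfeld generators $f^{(j_l)}$ preserves both the total number $m$ of factors and the range of superscripts involved, and then apply the pigeonhole principle to conclude that some exponent in the resulting PBW monomials must reach $p$, killing them in $\yp$. The paper phrases the bound slightly differently (using $q = \min\{|I|\}$ rather than the floor $|I|\geq 1$, and taking superscripts in the range $[\min, \max]$ rather than $\{1,\dots,r\}$), but these are cosmetic; your explicit bound $n = r(p-1)+1$ is perfectly valid.
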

\begin{proof}
We fix an order on the set $\{f^{(r)};~r>0\}$ by setting $f^{(1)}<f^{(2)}<\cdots$.
Given $f^{(r_1)}, f^{(r_2)},\dots,f^{(r_t)}\in \{f^{(r)};~r>0\}$,
we consider the element $$f^{(r_1)}f^{(r_2)}\cdots f^{(r_t)}\in Y_2^{[p],-}.$$
Using the relation \eqref{Drinfeld generators relation 5}, we may write
\begin{align}\label{formula1}
f^{(r_1)}f^{(r_2)}\cdots f^{(r_t)}=\sum\limits_{I\in\mathcal{T}}c_If^I,
\end{align}
where $c_I\in\kk$ and $\mathcal{T}$ is a finite subset of $\inn$.
Since both sides of \eqref{Drinfeld generators relation 5} are homogeneous,
it follows that the $f^I$ in \eqref{formula1} is homogeneous with $|I|=t$ for all $I\in\mathcal{T}$.
We put $k:=\min\{r_1,\dots,r_t\}$ as well as $l:=\max\{r_1,\dots,r_t\}$.
In view of \eqref{Drinfeld generators relation 5},
the Drinfeld generators appearing in \eqref{formula1} are contained in the set $\{f^{(r)};~k\leq r\leq l\}$.

Let $x=\sum_{I\in\mathcal{I}}a_If^I\in (Y_2^{[p],-})_+$ be a non-zero element,
where $\mathcal{I}\subseteq\ip$ is a finite subset and $|I|\neq 0$ for all $I\in\mathcal{I}$.
Assume that the Drinfeld generators in $x$ are $f^{(s_1)},f^{(s_2)},\dots,f^{(s_m)}$ with $1\leq s_1<s_2<\cdots<s_m$.
Put $q:=\min\{|I|;~I\in\mathcal{I}\}$.
Apply again the relation \eqref{Drinfeld generators relation 5} one obtains that  $x^n$ has the form
\begin{align}\label{ast sum}
\sum\limits_{J\in\mathcal{J}}b_Jf^J,
\end{align}
where $\mathcal{J}$ is a finite subset of $\inn$ and $|J|\geq nq$ for all $J\in\mathcal{J}$.
On the other hand,
the foregoing observation implies that the Drinfeld generators in \eqref{ast sum} must be contained in the set $\{f^{(r)};~s_1\leq r\leq s_m\}$.
Given $J=(j_1,j_2,\cdots)\in\mathcal{J}$,
we have the summand
\[b_Jf^J=b_J(f^{(s_1)})^{j_{s_1}}(f^{(s_1+1)})^{j_{s_1+1}}\cdots(f^{(s_m)})^{j_{s_m}}.\]
Consequently,
for large enough $n$, it must be zero, as desired.
\end{proof}

\begin{Definition}
A formal power series $f(u)\in \kk[[u^{-1}]]$ is called {\it restricted}
if
\[f(u)f(u-1)\cdots f(u-p+1)=1,\]
which implies that $f(u) \in 1 + u^{-1} \kk[[u^{-1}]]$. Given two formal series 
\begin{eqnarray*}
& & \lambda_1(u)=1+\lambda_1^{(1)}u^{-1}+\lambda_1^{(2)}u^{-2}+\cdots,\\
& & \lambda_2(u)=1+\lambda_2^{(1)}u^{-1}+\lambda_2^{(2)}u^{-2}+\cdots,
\end{eqnarray*}
we say that the tuple $\lambda(u)=(\lambda_1(u),\lambda_2(u))$ is restricted if both $\lambda_1(u)$ and $\lambda_2(u)$ are restricted.
\end{Definition}

\begin{Lemma}\label{restricted polynomial}
Let $g(u)\in \kk[u^{-1}]$ be a polynomial with the decomposition
\begin{align}\label{gu decomp}
g(u)=(1+\alpha_1u^{-1})\cdots(1+\alpha_ku^{-1}).
\end{align}
Then $g(u)$ is restricted if and only if $\alpha_i\in \FF_p$ for all $i$.
\end{Lemma}
\begin{proof}
Assume $g(u)$ is restricted. Multiplying $u^k$ on both sides of \eqref{gu decomp}, we have
\[u^k g(u)=(u+\alpha_1)\cdots (u+\alpha_k).\]
It follows that
\[
\prod_{i=0}^{p-1}(u-i)^k\prod\limits_{i=0}^{p-1}g(u-i)=\prod\limits_{j=0}^{k}\prod\limits_{i=0}^{p-1}(u+\alpha_j-i).
\]
Note that $\prod_{i=0}^{p-1}g(u-i)=1$ because $g(u)$ is restricted.
Then the assertion follows by comparing the roots of the above equation.
The converse can be proven similarly, and so we omit the argument.
\end{proof}

We define the {\it baby Verma module} $\zpl$ corresponding to $\lambda(u)$ as the quotient of $\yp$
by the left ideal generated by the elements $e^{(r)}$ with $r>0$, and by $d_i^{(r)}-\lambda_i^{(r)}$ with $i=1,2$ and $r>0$.

Given a baby Verma module $\zpl$, we denote by $1_{\lambda(u)}$ the image of the element $1\in\yp$ in the quotient.
Clearly, $\zpl=\yp 1_{\lambda(u)}$.
Owing to \eqref{pbw basis of yp2} ,
there is an isomorphism
\[\zpl\cong Y_2^{[p],-}\otimes_{\kk}\kk1_{\lambda(u)}\]
of vector spaces.

Alternatively, the baby Verma modules can be described in terms of the RTT presentation of $Y_2$,
as follows (cf. \cite[Proposition 3.2.2]{Mol07}). 

\begin{Proposition}\label{another def of zpl}
Let $\lambda(u)=(\lambda_1(u),\lambda_2(u))$ be restricted.
The baby Verma module $\zpl$ equals to the quotient of $\yp$ by the left ideal $J$ which is generated by the elements
$t_{1,2}^{(r)}$ with $r>0$ and by $t_{i,i}^{(r)}-\lambda_i^{(r)}$ with $i=1,2$ and $r>0$.
\end{Proposition}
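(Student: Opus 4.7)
The plan is to show that the two left ideals defining the quotient coincide, and then the equality of modules is immediate. Write $J_D$ for the left ideal in $\yp$ generated by $\{e^{(r)};\, r>0\}$ together with $\{d_i^{(r)}-\lambda_i^{(r)};\, i=1,2,\,r>0\}$, so $\zpl = \yp/J_D$ by definition, and let $J$ denote the left ideal from the proposition. I will prove $J=J_D$ by mutual inclusion, using the Gauss decomposition identities \eqref{t1122 by d}, \eqref{t1221 by d} to translate between the two generating sets. Throughout, the only fact I need about $J$ and $J_D$ beyond the generators is that they are closed under left multiplication.

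For $J\subseteq J_D$: the identity $t_{1,1}(u)=d_1(u)$ gives $t_{1,1}^{(r)}-\lambda_1^{(r)}=d_1^{(r)}-\lambda_1^{(r)}\in J_D$ on the nose. For $t_{1,2}^{(r)}$, I expand the product $t_{1,2}(u)=d_1(u)e(u)$ as a power series to get $t_{1,2}^{(r)}=\sum_{s=0}^{r-1} d_1^{(s)}e^{(r-s)}$, which lies in $J_D$ since each $e^{(r-s)}$ does and $J_D$ is a left ideal. For $t_{2,2}^{(r)}-\lambda_2^{(r)}$, I expand $t_{2,2}(u)=f(u)d_1(u)e(u)+d_2(u)$ to obtain
\[
t_{2,2}^{(r)}-\lambda_2^{(r)}=(d_2^{(r)}-\lambda_2^{(r)})+\sum_{\substack{a+b+c=r\\ a,\,c\geq 1}} f^{(a)}d_1^{(b)}e^{(c)},
\]
where the first summand is a generator of $J_D$ and each term in the sum is a left multiple of $e^{(c)}\in J_D$. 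The opposite inclusion $J_D\subseteq J$ is symmetric: $d_1^{(r)}-\lambda_1^{(r)}=t_{1,1}^{(r)}-\lambda_1^{(r)}\in J$; inverting $d_1(u)$ via the series $d_1'(u)$ of \eqref{di di' relation} gives $e(u)=d_1'(u)t_{1,2}(u)$, so $e^{(r)}=\sum_{s=0}^{r-1} d_1'^{(s)}t_{1,2}^{(r-s)}\in J$; and rearranging the $t_{2,2}$ identity yields $d_2^{(r)}-\lambda_2^{(r)}=(t_{2,2}^{(r)}-\lambda_2^{(r)})-\sum f^{(a)}d_1^{(b)}e^{(c)}$, which lies in $J$ since the $e^{(c)}$ have just been shown to lie in $J$ and $J$ is a left ideal.

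There is no serious obstacle here: the argument is a direct bookkeeping exercise with the Gauss decomposition. The only care required is to keep track of the left-ideal structure—each Drinfeld generator of $J_D$ must be rewritten as a sum $(\text{something})\cdot(\text{generator of }J)$, rather than in the reverse order, and symmetrically when going the other way. Because $f(u)$ and $e(u)$ have no constant term and $d_1(u)$ is invertible as a formal power series, all the substitutions produce only finite sums at each order in $u^{-1}$, so no convergence or $p$-restrictedness issue arises; the hypothesis that $\lambda(u)$ is restricted is what ensures the baby Verma module is nonzero in $\yp$, but plays no role in the identification of the two presentations.
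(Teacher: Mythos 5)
Your proof is correct and takes essentially the same approach as the paper, which also argues $J = J_D$ by mutual inclusion using the Gauss decomposition identities \eqref{t1122 by d}, \eqref{t1221 by d}. The one place you are more explicit is the reverse inclusion, where you spell out that $e(u)=d_1'(u)t_{1,2}(u)$ and hence $e^{(r)}\in J$ before handling $d_2^{(r)}-\lambda_2^{(r)}$; the paper leaves this step to the reader with ``one argues similarly.''
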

\begin{proof}
Let $I$ be the left ideal generated by the elements $e^{(r)}$ with $r>0$,
and by $d_i^{(r)}-\lambda_i^{(r)}$ with $i=1,2$ and $r>0$.
It suffices to show that $I=J$.

Since $e(u)\equiv 0~(\Mod I)$, it follows from \eqref{t1221 by d} that
\[t_{12}(u)=d_1(u)e(u)\equiv 0~(\Mod I),\]
so that $t_{1,2}^{(r)}\in I$.
Note that $t_{1,1}(u)=d_1(u)$, so that $t_{1,1}^{(r)}-\lambda_1^{(r)}\in I$.
Using \eqref{t1122 by d} one obtains
\[t_{2,2}(u)-\lambda_2(u)=f(u)d_1(u)e(u)+d_2(u)-\lambda_2(u)\equiv d_2(u)-\lambda_2(u)\equiv 0~(\Mod I).\]
Consequently, $J\subseteq I$. One argues similarly for $I\subseteq J$ applying again \eqref{t1122 by d} and \eqref{t1221 by d}.
\end{proof}

\begin{Proposition}\label{zpl unique maximal submodule}
$\zpl$ has a unique maximal submodule.
\end{Proposition}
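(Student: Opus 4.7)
The plan is to carry out the standard ``Verma-style'' argument: I will exhibit a proper vector subspace $M' \subsetneq \zpl$ of codimension one such that every proper submodule of $\zpl$ is automatically contained in $M'$. Granted this, the sum of all proper submodules will lie inside $M'$ and hence will be proper, so it will be the unique maximal submodule.

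The decomposition to use is the one already recorded after the definition of $\zpl$: the PBW basis \eqref{pbw basis of yp2}, together with the identities $e^{(r)} \cdot 1_{\lambda(u)} = 0$ and $d_i^{(r)} \cdot 1_{\lambda(u)} = \lambda_i^{(r)} 1_{\lambda(u)}$, provides a $\kk$-vector space isomorphism $Y_2^{[p],-} \xrightarrow{\sim} \zpl$, $a \mapsto a \cdot 1_{\lambda(u)}$. Setting $M' := (Y_2^{[p],-})_+ \cdot 1_{\lambda(u)}$, this reads
\begin{equation*}
\zpl \;=\; \kk \cdot 1_{\lambda(u)} \;\oplus\; M'.
\end{equation*}
I would emphasize that $M'$ is in general \emph{not} a $\yp$-submodule (because by \eqref{Drinfeld generators relation 3}, $e^{(r)}$ can send $f^{(s)} \cdot 1_{\lambda(u)} \in M'$ to a nonzero scalar multiple of $1_{\lambda(u)}$), but the argument below only needs $M'$ as a proper vector subspace.

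The main step is to verify that every proper submodule $N \subseteq \zpl$ lies inside $M'$. If not, pick $v \in N \setminus M'$ and, using the decomposition above, write $v = (c + x) \cdot 1_{\lambda(u)}$ with $c \in \kk^\times$ and $x \in (Y_2^{[p],-})_+$. By Lemma \ref{elementnilpotent} the element $x$ is nilpotent in $\yp$, so a finite geometric series furnishes $y := c^{-1} \sum_{k \geq 0} (-c^{-1} x)^k \in Y_2^{[p],-}$ satisfying $y(c+x) = 1$ in $\yp$. Acting on $v$ gives $y \cdot v = 1_{\lambda(u)} \in N$, forcing $N = \zpl$, a contradiction. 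Once this is in place, the sum of all proper submodules is contained in $M'$, hence is proper, and is therefore the unique maximal submodule.

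The whole plan hinges on the nilpotency supplied by Lemma \ref{elementnilpotent}, which is what makes $c + x$ invertible inside $Y_2^{[p],-}$; this is the only genuine ingredient and I do not foresee further obstacles.
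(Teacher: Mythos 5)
Your proof is correct and is essentially the same as the paper's own argument: both isolate the codimension-one subspace $(Y_2^{[p],-})_+\cdot 1_{\lambda(u)}$, use Lemma~\ref{elementnilpotent} to make the scalar-plus-nilpotent element invertible via a finite geometric series, and conclude that every proper submodule lies in that subspace so their sum is proper. The only difference is cosmetic: the paper normalizes to $1-x$ while you keep a general scalar $c$.
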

\begin{proof}
We shall prove any proper submodule $M$ of $\zpl$ is contained in $(Y_2^{[p],-})_+.1_{\lambda(u)}$.
Suppose that $M\nsubseteq (Y_2^{[p],-})_+.1_{\lambda(u)}$,
then there is a nonzero element $y=(1-x).1_{\lambda(u)}\in M$ and $x\in (Y_2^{[p],-})_+$.
Lemma \ref{elementnilpotent} provides an integer $n$ such that $x^n=0$.
We have for $x':=1+x+\cdots+x^{n-1}$, $x'y=1_{\lambda(u)}\in M$,
so that $M=\zpl$, a contradiction.
Obviously,  $(Y_2^{[p],-})_+.1_{\lambda(u)}$ is a proper subspace of $\zpl$.
Hence, the sum of all proper submodules of $\zpl$ is the unique maximal submodule.
\end{proof}

\begin{Definition}
Given a restricted tuple $\lambda(u)=(\lambda_1(u),\lambda_2(u))$,
the irreducible representation $L^{[p]}(\lambda(u))$ of $\yp$ is defined as the quotient of the baby Verma module $\zpl$ by the unique maximal proper submodule.
\end{Definition}

Let $Y_2^{[p],0}$ denote the subalgebra of $\yp$ generated by all $d'$s.
The one dimensional quotient $L^{[p]}(\lambda(u))/(Y_2^{[p],-})_+L^{[p]}(\lambda(u))$
can be viewed as a $Y_2^{[p],0}$-module and $d_i(u)$ acts on this space via $\lambda_i(u)$.
So we get $L^{[p]}(\lambda(u))\cong L^{[p]}(\nu(u))$ if and only if $\lambda(u)=\nu(u)$.

Let $L$ be a finite dimensional representation of $\yp$.
We define the subspace $L^0$ of $L$ via
\[L^0:=\{v\in L;~e(u)v=0\}.\]

\begin{Lemma}\label{l0 nonzero}
If $L$ is a finite dimensional representation of $\yp$,
then the space $L^0$ is nonzero.
\end{Lemma}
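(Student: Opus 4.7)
My plan is to prove the existence of a common null vector for all $e^{(r)}$ on $L$ by showing that the action of the subalgebra generated by the $e^{(r)}$'s is by a nil algebra, and then applying an Engel-type argument.

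The first step is to establish an $e$-analogue of Lemma \ref{elementnilpotent}: let $Y_2^{[p],+}$ denote the subalgebra of $\yp$ generated by $\{e^{(r)} ; r>0\}$ and $(Y_2^{[p],+})_+$ the ideal generated by these elements; I claim that for any $x \in (Y_2^{[p],+})_+$ there is a positive integer $n$ with $x^n = 0$. Relation \eqref{Drinfeld generators relation 4} has exactly the same shape as \eqref{Drinfeld generators relation 5}, so rewriting any product $e^{(r_1)} e^{(r_2)} \cdots e^{(r_t)}$ as a linear combination of ordered monomials $e^I$ introduces only generators $e^{(r)}$ with $\min(r_i) \leq r \leq \max(r_i)$, while preserving the degree $|I|=t$. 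Writing $x = \sum a_I e^I$ and expanding $x^n$ produces a sum of ordered monomials of total degree $\geq n \cdot \min_{I}|I|$ in a fixed finite set of generators whose exponents are bounded by $p-1$ in $\yp$; for $n$ sufficiently large every such monomial must vanish.

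Second, I would look at the image $\mathcal{E} \subseteq \End_{\kk}(L)$ of $(Y_2^{[p],+})_+$. Because $L$ is finite dimensional, $\mathcal{E}$ is a finite-dimensional subspace of $\End_{\kk}(L)$; because $(Y_2^{[p],+})_+$ is multiplicatively closed, $\mathcal{E}$ is a (non-unital) subalgebra; and by the preceding step every element of $\mathcal{E}$ is nilpotent.

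Finally, I would invoke the standard fact that a finite-dimensional nil subalgebra of $\End_{\kk}(L)$ is nilpotent as an algebra: the unital algebra $\kk \cdot \id_L + \mathcal{E}$ is finite-dimensional, its Jacobson radical is nilpotent, and it contains the nil ideal $\mathcal{E}$, so $\mathcal{E}^n = 0$ in $\End_{\kk}(L)$ for some $n$. Taking the largest $k$ with $\mathcal{E}^k L \neq 0$ (which exists because $L \neq 0$), any nonzero vector $v \in \mathcal{E}^k L$ satisfies $\mathcal{E} v \subseteq \mathcal{E}^{k+1} L = 0$, hence $e^{(r)} v = 0$ for every $r>0$ and thus $v \in L^0$.

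The only genuinely technical step is the $e$-analogue of Lemma \ref{elementnilpotent}; once that is in hand, the passage from ``every element nilpotent'' to ``common null vector'' is standard Wedderburn/Engel theory and presents no real obstacle.
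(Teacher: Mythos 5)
Your proof is correct, and it takes a genuinely different route from the paper's. The paper argues by contradiction: assuming $L^0=(0)$, it builds an infinite sequence of nonzero vectors $v_1,v_2,\dots$ and strictly increasing integers $n_1<n_2<\cdots$ with $e^{(r)}v_i=0$ for $r\le n_i$ and $e^{(n_i+1)}v_i\neq 0$, using nilpotence of each single $e^{(r)}$ together with relation \eqref{Drinfeld generators relation 4} to push the kill-zone further at each step, and then shows the $v_i$ are linearly independent, contradicting $\dim_\kk L<\infty$. You instead first establish the $e$-side analogue of Lemma \ref{elementnilpotent} --- which indeed transfers verbatim, since \eqref{Drinfeld generators relation 4} has the same shape as \eqref{Drinfeld generators relation 5} and in particular the commutator identity keeps indices within the original range $[\min r_i,\max r_i]$ while preserving degree --- so that the image $\mathcal{E}$ of the positive augmentation ideal in $\End_\kk(L)$ is a finite-dimensional nil (non-unital) subalgebra, and you then invoke the standard Wedderburn/Jacobson-radical fact that such an algebra is nilpotent to produce a common null vector. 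Your approach is more structural and in fact proves slightly more (nilpotence of all of $\mathcal{E}$ on $L$, not just existence of one annihilated vector), and it highlights the symmetry between the $e$- and $f$-sides that the paper leaves implicit; the trade-off is that you import the structure theorem for finite-dimensional nil algebras, whereas the paper's contradiction argument is entirely self-contained, using only the defining relations and nilpotence of the individual generators $(e^{(r)})^p=0$ in $\yp$.
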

\begin{proof}
Suppose that $L^0=(0)$.
Since $e^{(1)}$ is nilpotent,
there exists a nonzero vector $v_1 \in L$ such that $e^{(1)}v_1=0$.
By assumption, we can find an positive integer $n_1$ such that
$e^{(r)}v_1 = 0$ for all $1\leq r\leq n_1$ and $e^{(n_1+1)}v_1\neq 0$.

Again, the element $e^{(n_1+1)}$ is nilpotent.
There exists some integer $t$ with $1\leq t\leq p-1$ such that $(e^{(n_1+1)})^lv_1\neq 0$ for all $1\leq l\leq t$
and $(e^{(n_1+1)})^{t+1}v_1=0$.
Setting $v_2:=(e^{(n_1+1)})^tv_1\neq 0$,
we obtain $e^{(n_1+1)}v_2=0$.
For $1\leq i\leq n_1$,
the relation \eqref{Drinfeld generators relation 4} yields
\[e^{(i)}v_2=e^{(i)}(e^{(n_1+1)})^tv_1=\sum c_{a_1,\cdots,a_k}(e^{(n_1+1)})^{a_1}(e^{(n_1)})^{a_2}\cdots (e^{(i)})^{a_k}v_1,\]
where $k=n_1+2-i$ and $a_1+a_2+\cdots +a_k=t+1$.
The choice of $v_2$ implies that the above sum is zero.
This show that $e^{(r)}v_2=0$ for all $1\leq r\leq n_1+1$.
So that we can find an positive integer $n_2>n_1$ such that
$e^{(r)}v_2=0$ for all $1\leq r\leq n_2$ and $e^{(n_2+1)}v_2\neq 0$.

Repeat the above argument, we obtain two sequences 
$v_1,v_2,\dots$ and 
$1\leq n_1<n_2<\cdots$
which satisfy 
(a) $v_i\neq 0$;
(b) $e^{(1)}v_i=e^{(2)}v_i=\cdots=e^{(n_i)}v_i=0$;
(c) $e^{(n_i+1)}v_i\neq 0$ for every positive integer $i$.
Let $m$ be an arbitrary positive integer.
Assume that $c_1v_1+c_2v_2+\cdots +c_mv_m=0$.
It follows from (b) that $c_1e^{(n_1+1)}v_1=0$,
and property (c) gives $c_1=0$.
By the same token, we obtain all $c_i$ are zero.
Therefore $v_1,v_2,\cdots,v_m$ are linearly independent.
As $L$ is finite dimensional,
we arrive at a contradiction.
\end{proof}

\begin{Theorem}\label{fd irreducible rep}
Every finite dimensional simple $\yp$-module is isomorphic to some $L^{[p]}(\lambda(u))$.
\end{Theorem}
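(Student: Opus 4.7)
The plan is to produce, inside $L$, a highest weight vector annihilated by all $e^{(r)}$ on which every $d_i^{(r)}$ acts as a scalar, and then appeal to the universal property of $Z^{[p]}(\lambda(u))$ together with Proposition~\ref{zpl unique maximal submodule}.

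First I would produce a simultaneous eigenvector. By Lemma~\ref{l0 nonzero} the subspace $L^0$ is nonzero. I would next check that each $d_i^{(r)}$ preserves $L^0$: given $v \in L^0$ and any $s > 0$, relation \eqref{Drinfeld generators relation 1} gives
\[
e^{(s)} d_i^{(r)} v = d_i^{(r)} e^{(s)} v \mp \sum_{t=0}^{r-1} d_i^{(t)} e^{(r+s-1-t)} v,
\]
and every index $r+s-1-t$ satisfies $r+s-1-t \geq s \geq 1$, so the right hand side vanishes. Because $L^0$ is a finite dimensional subspace of $L$ and the family $\{d_i^{(r)} : i = 1,2,\ r > 0\}$ is pairwise commuting by \eqref{di dj commu}, it admits a common eigenvector $v \in L^0 \setminus \{0\}$, say $d_i^{(r)} v = \lambda_i^{(r)} v$ for scalars $\lambda_i^{(r)} \in \kk$.

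Set $\lambda_i(u) := 1 + \sum_{r > 0} \lambda_i^{(r)} u^{-r}$ and $\lambda(u) = (\lambda_1(u), \lambda_2(u))$. I would then verify that $\lambda(u)$ is restricted. By Remark~\ref{remark1}, $b_i(u) = 1$ in $\yp$, hence
\[
v = b_i(u) v = d_i(u) d_i(u-1) \cdots d_i(u-p+1) v = \lambda_i(u) \lambda_i(u-1) \cdots \lambda_i(u-p+1)\, v,
\]
where the second equality uses that each $d_i(u-k)$ acts on $v$ by the scalar series $\lambda_i(u-k)$ (since the $d_i^{(r)}$ all have $v$ as eigenvector). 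Comparing coefficients gives $\lambda_i(u) \lambda_i(u-1) \cdots \lambda_i(u-p+1) = 1$ for $i = 1,2$.

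Finally, since $e^{(r)} v = 0$ for all $r > 0$ and $d_i^{(r)} v = \lambda_i^{(r)} v$, the map $\yp \to L$, $x \mapsto x \dact v$, factors through the left ideal defining $Z^{[p]}(\lambda(u))$, producing a $\yp$-module homomorphism $Z^{[p]}(\lambda(u)) \to L$ sending $1_{\lambda(u)} \mapsto v$. Its image is a nonzero submodule of $L$, hence all of $L$ by irreducibility. Thus $L$ is a simple quotient of $Z^{[p]}(\lambda(u))$, and by Proposition~\ref{zpl unique maximal submodule} the unique simple quotient is $L^{[p]}(\lambda(u))$, so $L \cong L^{[p]}(\lambda(u))$.

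The main obstacle is the verification that the eigenvalue sequences $\lambda_i(u)$ are restricted; the stability of $L^0$ under the Cartan generators and the existence of a common eigenvector are straightforward consequences of the Drinfeld relations and finite-dimensionality, but the restrictedness relies crucially on invoking Remark~\ref{remark1} to identify $b_i(u)$ with $1$ in $\yp$.
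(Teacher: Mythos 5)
Your proposal is correct and follows essentially the same route as the paper's proof: use Lemma~\ref{l0 nonzero} to get $L^0 \neq (0)$, use relation~\eqref{Drinfeld generators relation 1} to show $L^0$ is stable under the $d_i^{(r)}$, extract a common eigenvector via~\eqref{di dj commu}, deduce restrictedness of $\lambda(u)$ from $b_i(u)=1$ (Remark~\ref{remark1}), and conclude by the universal property of the baby Verma module together with Proposition~\ref{zpl unique maximal submodule}. The only difference is that you write out the index bound $r+s-1-t \geq s$ explicitly, which the paper leaves implicit.
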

\begin{proof}
This is very similar to the proof in characteristic zero explained in \cite[Theorem 3.2.7]{Mol07}.
We just give a brief account by using the Drinfeld generators.

By Lemma \ref{l0 nonzero},  we know that $L^0\neq (0)$.
We show that the subspace $L^0$ is invariant with respect the action of all elements $d_i^{(r)}$.
If $v\in L^0$, then \eqref{Drinfeld generators relation 1} implies that $[d_i^{(r)},e^{(s)}]v=0$ for all $r,s>0$.
It follows that $e^{(s)}d_i^{(r)}v=0$, so that $d_i^{(r)}v\in L^0$.

Furthermore, \eqref{di dj commu} implies that
the elements $d_i^{(r)}$ with $i=1,2$ and $r>0$ act on $L^0$ as pairwise commuting operators. 
Hence, there exists a nonzero vector $\zeta\in L^0$ such that $d_i^{(r)}\zeta=\lambda_i^{(r)}\zeta$,
where $\lambda_i^{(r)}\in\kk$.
Letting $\lambda_i(u):=1+\lambda_i^{(1)}u^{-1}+\lambda_i^{(2)}u^{-1}+\cdots\in\kk[[u^{-1}]]$,
we put $\lambda(u)=(\lambda_1(u),\lambda_2(u))$.
Then $d_i(u)\zeta=\lambda_i(u)\zeta$.
Since $1=b_i(u)=d_i(u)d_i(u-1)\cdots d_i(u-p+1)$ in $\yp$ (Remark \ref{remark1}),
it follows that $\lambda(u)$ is restricted.
Note that $L$ is irreducible and $L=\yp\zeta$.
Clearly, there is a surjective homomorphism $\zpl\twoheadrightarrow L;~1_{\lambda(u)}\mapsto\zeta$.
Proposition \ref{zpl unique maximal submodule} now yields an isomorphism $L\cong L^{[p]}(\lambda(u))$.
\end{proof}

\subsection{Evaluation modules}\label{S:evaluation module}
For any $a,b\in\FF_p$,
we consider the irreducible $U_0(\mathfrak{gl}_2)$-module $L(a,b)$.
The module $L(a,b)$ is generated by a vector $\xi$ and 
\begin{align}\label{maximal vector}
e_{1,1}\xi=a \xi, \, \, \, \, e_{2,2}\xi=b\xi \, \, \, \,  \text{and} \, \, \, \, e_{1,2}\xi=0.
\end{align}
For $n \in\FF_p$, we denote by $[n]\in\NN$ the minimal element such that $[n]\equiv n~(\Mod p)$.
The module $L(a,b)$ has a basis $e_{2,1}^k\xi$ for $k$ from $0$ to $[a-b]$ (cf. \cite[\S 5.2]{Jan97}).
The homomorphism \eqref{pinp} allows us to view $L(a,b)$ as a $\yp$-module.
Namely, for any indices $i,j\in\{1,2\}$ the generator $t_{i,j}^{(1)}$ acts on $L(a,b)$ as $e_{i,j}$,
while any generator $t_{i,j}^{(r)}$ with $r\geq 2$ acts as the zero operator.
We will keep the same notation $L(a,b)$ for this $\yp$-module and call it the {\it evaluation module}.
By \eqref{another def of zpl}, 
it is clear that $L(a,b)$ is isomorphic to the module $L^{[p]}(\lambda_1(u),\lambda_2(u))$,
where $\lambda_1(u)=1+a u^{-1}$ and $\lambda_2(u)=1+b u^{-1}$.

Let $L$ and $M$ be two $Y_2$-modules.
Since the algebra $Y_2$ is a Hopf algebra,
the tensor product space $L\otimes M$ can be equipped with a $Y_2$-action with the use of the comultiplication $\Delta$ on $Y_2$ (see \eqref{hopfstructure of y2}) by the rule
 \[x.(l\otimes m):=\Delta(x)(l\otimes m),~x\in Y_2,~l\in L,~m\in M.\]
Furthermore, since $\yp$ is a Hopf quotient of $Y_2$ (Theorem \ref{Theorem:yp is hopf}), this comultiplication descends to $\yp$, and we will use the same symbol $\Delta$.
By the same token, if both $L$ and $M$ are $\yp$-modules, then $L\otimes M$ can be equipped with a $\yp$-module structure with the use of the comultiplication $\Delta$.
 
\begin{Definition}
Given two sequences $\alpha = (\alpha_1,...,\alpha_k)$ and $\beta = (\beta_1,...,\beta_k)$ of elements of $\FF_p$, we say that $(\alpha, \beta)$ is {\it non-decreasing}
if for all $i=1,...,k$ the integer $[\alpha_i-\beta_i]$ is minimal in $$\{[\alpha_j-\beta_l] : i\leq j,l\}.$$
%the sequence of integers $([\alpha_1 - \beta_1], [\alpha_2 - \beta_2],...,[\alpha_k - \beta_k])$ is non-decreasing.
\end{Definition}

Write
\[
\lambda_1(u):=(1+\alpha_1u^{-1})\cdots(1+\alpha_ku^{-1})\]
and 
\[ \lambda_2(u):=(1+\beta_1u^{-1})\cdots(1+\beta_ku^{-1}).\vspace{4pt}\]
Both $\lambda_1(u)$ and $\lambda_2(u)$ are restricted by Lemma \ref{restricted polynomial} because all the elements $\alpha_i$ and $\beta_i$ belong to $\FF_p$.

\begin{Proposition}\label{tensor product irreducible}
Given non-decreasing sequences $(\alpha, \beta)$ of elements of $\FF_p$, the tensor product
\begin{align}\label{tensor product}
L(\alpha_1,\beta_1)\otimes\cdots\otimes L(\alpha_k,\beta_k)
\end{align}
is an irreducible $\yp$-module and is isomorphic to $L^{[p]}(\lambda(u))=L^{[p]}(\lambda_1(u),\lambda_2(u))$.
\end{Proposition}
\begin{proof}
We can use the same proof \cite[Proposition 3.3.2]{Mol07} (see also \cite[Proposition 3.2.3]{Kal20} ).

Denote the module \eqref{tensor product} by $L$.
Let $\xi_i$ be the generating vector of $L(\alpha_i,\beta_i)$ \eqref{maximal vector} for $i=1,\dots,k$.
Using the definition \eqref{hopfstructure of y2} of $\Delta$, we obtain $t_{1,2}(u)\xi=0$, where $\xi=\xi_1\otimes\cdots\xi_k$.
It suffices to show that any vector $\zeta\in L$ satisfying $t_{1,2}(u)\zeta=0$ is proportional to $\xi$.
Now proceed by induction on $k$.
Write any such vector $\zeta=\sum_{r=0}^s e_{1,2}^r\xi_1\otimes \zeta_r$ and $\zeta_r$ some elements of $L(\alpha_2,\beta_2)\otimes\cdots\otimes L(\alpha_k,\beta_k)$.
We may assume that $s\leq [\alpha_1-\beta_1]$.
Then repeating all the steps of the proof of \cite[Proposition 3.3.2]{Mol07}, we obtain
the following relation:
\[
s(\alpha_1-\beta_1-s+1)(\alpha_1-\beta_2-s+1)\dots(\alpha_1-\beta_k-s+1)=0 .
\]
Since $0\leq [\alpha_1-\beta_1]\leq p-1$ and $0\leq s\leq [\alpha_1-\beta_1]$,
it follows that $(\alpha_1-\beta_1-s+1)=0$ only if $s=0$ and $[\alpha_1-\beta_1]=p-1$.
Our assumption on the parameter $\alpha_i$ and $\beta_i$ now implies that $[\alpha_1-\beta_j]\geq [\alpha_1-\beta_1]$ for all other $j$.
So that  $(\alpha_1-\beta_j-s+1)=0$ can be zero again only for $s=0$ and the claim follows.
\end{proof}

We continue to denote by $L$ the tensor product \eqref{tensor product}.
\begin{Proposition}\label{Prop:permutation is still irrdecible}
Suppose that the representation $L$ is irreducible.
Then any permutation of the tensor factors in $L$ gives an isomorphic representation of $\yp$.
\end{Proposition}
\begin{proof}
Let $L'$ be the representation obtained from $L$ by some permutation of the tensor factors,
then $L'$ has the form $L'=L(\alpha_{i_1},\beta_{i_1})\otimes\cdots\otimes L(\alpha_{i_k},\beta_{i_k})$,
where $\{i_1,\cdots,i_k\}=\{1,\cdots, k\}$.
Let $\xi_i$ be the generating vector of $L(\alpha_i,\beta_i)$ \eqref{maximal vector}.
Denote by $\eta:=\xi_{i_1}\otimes\cdots\otimes \xi_{i_k}$.
Using the definition \eqref{hopfstructure of y2} of $\Delta$ and \eqref{maximal vector},
we know that the submodule $\yp.\eta$ of $L'$ generated by $\eta$ is a homomorphic image of $Z^{[p]}(\lambda_1(u),\lambda_2(u))$.
As $L$ is irreducible, Proposition \ref{tensor product irreducible} implies that $L\cong L^{[p]}(\lambda_1(u),\lambda_2(u))$.
Therefore, $L$ is isomorphic to a subquotient of $L'$.
Since $L$ and $L'$ have the same dimension, 
they are isomorphic.
\end{proof}

From Theorem \ref{fd irreducible rep}, we know that every finite dimensional irreducible module of $\yp$ is isomorphic to a one of the simple modules $L^{[p]}(\lambda(u)) = L^{[p]}(\lambda_1(u),\lambda_2(u))$.
We first consider the case that both $\lambda_1(u)$ and $\lambda_2(u)$ are polynomials in $u^{-1}$.

\begin{Theorem}\label{two poly are tensor}
Suppose that $\lambda_1(u)$ and $\lambda_2(u)$ are restricted polynomials.
Then $L^{[p]}(\lambda(u)) = L^{[p]}(\lambda_1(u),\lambda_2(u))$ is isomorphic to some tensor product of evaluation modules.
In particular, $L^{[p]}(\lambda_1(u),\lambda_2(u))$ has finite dimension.
\end{Theorem}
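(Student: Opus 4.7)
The plan is to realize $L^{[p]}(\lambda_1(u),\lambda_2(u))$ concretely as a tensor product of evaluation modules, and then invoke the previous propositions. First, by Lemma~\ref{restricted polynomial} applied to $\lambda_1$ and $\lambda_2$ separately, I may factor
\[
\lambda_1(u) = \prod_{i=1}^{k_1}\bigl(1 + \alpha_i u^{-1}\bigr), \qquad
\lambda_2(u) = \prod_{i=1}^{k_2}\bigl(1 + \beta_i u^{-1}\bigr),
\]
with all $\alpha_i,\beta_i \in \FF_p$. Setting $k := \max(k_1,k_2)$ and padding the shorter of the two sequences with zeros (which does not alter the polynomials, since $1 + 0\cdot u^{-1} = 1$), I may assume both sequences have common length $k$.

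Next, I would permute the two sequences independently by a greedy procedure so that, for every $i$, the value $[\alpha_i - \beta_i]$ is minimal among $[\alpha_j - \beta_l]$ with $i \leq j,l \leq k$; this is exactly the hypothesis of Proposition~\ref{tensor product irreducible}. Such independent permutations do not change the multisets $\{\alpha_i\}$ and $\{\beta_i\}$, and therefore preserve $\prod(1+\alpha_iu^{-1})=\lambda_1(u)$ and $\prod(1+\beta_iu^{-1})=\lambda_2(u)$. I then form the tensor product
\[
T := L(\alpha_1,\beta_1) \otimes \cdots \otimes L(\alpha_k,\beta_k),
\]
which by Proposition~\ref{tensor product irreducible} is an irreducible $Y_2$-module, and by Proposition~\ref{L is restricted module} descends to a $\yp$-module isomorphic to $L^{[p]}(\lambda_1(u),\lambda_2(u))$. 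This realizes $L^{[p]}(\lambda_1(u),\lambda_2(u))$ as a tensor product of evaluation modules, as required.

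Finite-dimensionality is then immediate: every evaluation module $L(\alpha,\beta)$ satisfies $\dim L(\alpha,\beta)=[\alpha-\beta]+1\leq p$, giving $\dim T \leq p^k < \infty$. The main point needing care is the combinatorial step of padding and reordering: one needs to verify that a greedy selection actually produces a pairing at each position $i$ whose diagonal achieves the minimum over all remaining cross-pairs. This follows by a straightforward induction on $i$, choosing at each stage a remaining pair $(j,l)$ attaining the minimum of $[\alpha_j-\beta_l]$ and swapping it to position $i$; after this, everything reduces to direct applications of Lemma~\ref{restricted polynomial}, Proposition~\ref{tensor product irreducible}, and Proposition~\ref{L is restricted module}.
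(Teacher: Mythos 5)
Your proof is correct and follows essentially the same route as the paper: factor $\lambda_1,\lambda_2$ into linear factors via Lemma~\ref{restricted polynomial}, reorder the roots to satisfy the hypothesis of Proposition~\ref{tensor product irreducible}, and then invoke Proposition~\ref{L is restricted module} to identify the resulting tensor product with $L^{[p]}(\lambda_1(u),\lambda_2(u))$. The paper compresses the zero-padding to a common length and the existence of a valid reordering into the phrase ``renumerate them in a way consistent with Proposition~\ref{tensor product irreducible}''; your explicit greedy induction and padding argument simply fills in those routine details.
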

\begin{proof}
Write the decomposition $\lambda_1(u)=(1+\alpha_1u^{-1})\cdots(1+\alpha_ku^{-1})$
and $ \lambda_2(u)=(1+\beta_1u^{-1})\cdots(1+\beta_ku^{-1})$ for some $k\geq 0$ and some parameters 
$\alpha_i, \beta_i$.
By Lemma \ref{restricted polynomial}, all of the elements $\alpha_i, \beta_i$ belong to the field $\FF_p$.
Reorder them so that $(\alpha, \beta)$ is non-decreasing in accordance with Proposition \ref{tensor product irreducible}. We obtain the isomorphism
\[ 
L^{[p]}(\lambda_1(u),\lambda_2(u))\cong L(\alpha_1,\beta_1)\otimes\cdots\otimes L(\alpha_k,\beta_k),
\]
as required.
\end{proof}

\begin{Lemma}\label{exist f}
If $L^{[p]}(\lambda(u)) = L^{[p]}(\lambda_1(u),\lambda_2(u))$ is finite dimensional
then there is a formal series $f(u)\in 1+u^{-1}\kk[[u^{-1}]]$ such that
$f(u)\lambda_1(u)$ and $f(u)\lambda_2(u)$ are polynomials in $u^{-1}$.
\end{Lemma}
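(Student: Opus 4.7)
The plan is to use the finite-dimensionality of $L := L^{[p]}(\lambda_1(u),\lambda_2(u))$ to force a linear recurrence on the coefficients of the formal series $\mu(u) := \lambda_1(u)^{-1}\lambda_2(u)$; this will imply that $\mu(u)$ is a rational function of $u$, from which the desired $f(u)$ can be read off by writing $\mu(u)$ as a ratio of two polynomials in $u^{-1}$ each with constant term $1$.

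Let $\xi \in L$ denote the image of $1_{\lambda(u)}$, so that $e(u)\xi = 0$ and $d_i(u)\xi = \lambda_i(u)\xi$ for $i = 1,2$. Since $L$ is finite-dimensional, the family $\{f^{(r)}\xi : r \ge 1\}$ must be linearly dependent, and we can fix a non-trivial relation
\[
\sum_{r=1}^{n+1} c_r f^{(r)}\xi = 0, \qquad c_r \in \kk,\ c_{n+1} \neq 0.
\]
Apply $e^{(s)}$ to both sides. Because $e^{(s)}\xi = 0$ and the commutation relation \eqref{Drinfeld generators relation 3} gives $[e^{(s)}, f^{(r)}] = -h^{(r+s-1)}$, where $h(u) := d_1'(u)d_2(u) = \sum_{m\ge 0} h^{(m)}u^{-m}$, the identity transforms into
\[
\sum_{r=1}^{n+1} c_r \mu^{(r+s-1)} = 0 \qquad \text{for all } s \ge 1,
\]
where $\mu^{(m)}$ denotes the coefficient of $u^{-m}$ in $\mu(u)$. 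This uses that $d_1'(u)\xi = \lambda_1(u)^{-1}\xi$ and $d_2(u)\xi = \lambda_2(u)\xi$, so $h^{(m)}\xi = \mu^{(m)}\xi$.

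Introduce the polynomial $g(u) := \sum_{r=1}^{n+1} c_r u^{r-1} \in \kk[u]$, of degree $n$ with leading coefficient $c_{n+1}$. A direct coefficient expansion shows that the recurrence above is precisely the condition that $g(u)\mu(u)$ has trivial negative-power part in $u$; hence $g(u)\mu(u) = P(u)$ is a polynomial in $u$ of degree exactly $n$ with leading coefficient $c_{n+1}$. Dividing both $P$ and $g$ by $c_{n+1}u^n$ yields $\mu(u) = \tilde P(u^{-1})/\tilde Q(u^{-1})$ with $\tilde P(u^{-1}), \tilde Q(u^{-1}) \in 1 + u^{-1}\kk[u^{-1}]$. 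Taking $f(u) := \tilde Q(u^{-1})\lambda_1(u)^{-1}$ then produces an element of $1 + u^{-1}\kk[[u^{-1}]]$ satisfying $f(u)\lambda_1(u) = \tilde Q(u^{-1})$ and $f(u)\lambda_2(u) = \tilde Q(u^{-1})\mu(u) = \tilde P(u^{-1})$, both polynomials in $u^{-1}$, as required.

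The decisive step is the translation of a linear relation among the vectors $f^{(r)}\xi$ into a linear recurrence on the scalars $\mu^{(m)}$ via the commutator identity $[e^{(s)}, f^{(r)}] = -h^{(r+s-1)}$; once this is in hand, the rationality of $\mu(u)$ and the explicit construction of $f(u)$ reduce to elementary manipulations of formal power series.
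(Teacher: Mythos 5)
Your argument is correct, and it is essentially the same as the one the paper invokes: the paper simply cites Molev's Proposition 3.3.1 (which proves this statement over $\CC$) and notes that it goes through unchanged, whereas you have written out that argument explicitly in Drinfeld generators. The chain of ideas — finite-dimensionality forces a linear dependence among $f^{(r)}\xi$; applying $e^{(s)}$ together with the commutation relation \eqref{Drinfeld generators relation 3} turns this into a linear recurrence on the coefficients of $\mu(u)=\lambda_1(u)^{-1}\lambda_2(u)$; the recurrence says $g(u)\mu(u)$ has no negative-degree part; then $\mu(u)$ is a ratio of polynomials in $u^{-1}$ with constant term $1$, from which $f(u)$ is read off — is precisely the content of the cited proof. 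Your write-up is a clean, self-contained rendering of it.
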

\begin{proof}
The proof of
\cite[Proposition 3.3.1]{Mol07} works verbatim in our setting.
\end{proof}

\begin{Remark}\label{rk2}
It should be noted that the $f(u)\lambda_1(u)$ and $f(u)\lambda_2(u)$ in the above lemma might no longer be restricted.
Given a formal power series $f(u)\in 1+u^{-1}\kk[[u^{-1}]]$, we recall the automorphism $\mu_f$ \eqref{auto definition} of $Y_2$.
If $\mu_f$ factors to an automorphism of $\yp$, then clearly $f$ must be restricted.
For general restricted $\lambda(u)=(\lambda_1(u),\lambda_2(u))$, 
there may not exist a restricted power series $f(u)$ such $f(u)\lambda_1(u)$ and $f(u)\lambda_2(u)$ are polynomials even $L^{[p]}(\lambda(u))$ is finite dimensional (see Remark \ref{ex1}).
\end{Remark}

\begin{Theorem}\label{Drinfeld poly}
The irreducible representation $L^{[p]}(\lambda(u)) = L^{[p]}(\lambda_1(u),\lambda_2(u))$ is finite dimensional if and only if there exists a monic polynomial $P(u)$ in $u$ such that 
\begin{align}\label{drinfeld poly}
\frac{\lambda_1(u)}{\lambda_2(u)}=\frac{P(u+1)}{P(u)}.
\end{align}
\end{Theorem}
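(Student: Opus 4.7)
The strategy is to adapt Tarasov's classical argument (cf.\ \cite{Mol07}) to the restricted setting, using the tools developed in Sections 2 and 3.

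\emph{Necessity.} Suppose $L := L^{[p]}(\lambda(u))$ is finite dimensional, with cyclic highest-weight vector $\xi$. By Lemma \ref{exist f}, choose $f(u) \in 1+u^{-1}\kk[[u^{-1}]]$ so that $\mu_i(u) := f(u)\lambda_i(u)$ are polynomials in $u^{-1}$; crucially, the ratio $\mu_1/\mu_2 = \lambda_1/\lambda_2$ is unchanged. Rewriting the Drinfeld relation \eqref{Drinfeld generators relation 3} in generating-function form yields
\[
(u-v)\,[e(u), f(v)] \;=\; \kappa(u) - \kappa(v), \qquad \kappa(u) := d_1(u)^{-1}d_2(u).
\]
Setting $\eta_k := (f^{(1)})^k \xi$, repeated application of this identity expresses $e(u)\eta_k$ as a $\kk[[u^{-1}]]$-combination of $\eta_0,\dots,\eta_{k-1}$ with coefficients encoded in $\kappa$. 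Finite-dimensionality of $L$ then forces a minimal relation among the $\eta_k$; as in Tarasov's argument, translating that relation into a polynomial identity yields $\mu_1(u)/\mu_2(u) = P(u+1)/P(u)$ for a monic $P(u) \in \kk[u]$.

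\emph{Sufficiency.} Conversely, assume $\lambda_1/\lambda_2 = P(u+1)/P(u)$. The series $f(u) := 1/\lambda_2(u)$ is restricted, since $\prod_{j=0}^{p-1}f(u-j) = 1/\prod_j\lambda_2(u-j) = 1$; hence the automorphism $\mu_f$ of $\yp$ from \eqref{auto definition} identifies $L^{[p]}(\lambda_1,\lambda_2)$ with $L^{[p]}(P(u+1)/P(u), 1)$, and we may reduce to the case $\lambda_2 = 1$. Factor $P(u) = \prod_{i=1}^k (u-\gamma_i)$ over $\kk$. For each $\gamma \in \kk$, let $V[\gamma] := \kk v_0 \oplus \kk v_1$ carry the $\yp$-action
\[
t_{ij}(u)\cdot v \;=\; \Bigl(\delta_{ij} + \tfrac{1}{u-\gamma}\,e_{ij}\Bigr) v,
\]
where $e_{ij}$ acts as the standard matrix unit on $V[\gamma]$. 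Using the identity $\prod_{j=0}^{p-1}(w-j) = w^p - w$ in $\kk$, a direct computation shows $b_i(u)$ acts as $1$ on $V[\gamma]$; since each $t_{ij}^{(r)}$ is a scalar multiple of the nilpotent $e_{ij}$, we also have $(t_{ij}^{(r)})^p = 0$, so $V[\gamma]$ is indeed a $\yp$-module with highest weight $((u-\gamma+1)/(u-\gamma),1)$. Form the tensor product $M := V[\gamma_1] \otimes \cdots \otimes V[\gamma_k]$ via \eqref{comultiplication}: the vector $v_0^{\otimes k}$ is annihilated by $e(u)$ and carries weight $(P(u+1)/P(u),1)$, so the cyclic submodule $\yp \cdot v_0^{\otimes k} \subseteq M$ is a quotient of $Z^{[p]}((P(u+1)/P(u),1))$, whose unique simple head is $L^{[p]}(P(u+1)/P(u),1)$. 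Since $M$ is finite dimensional, so is this simple head.

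\emph{Main obstacle.} The most delicate point is the necessity direction: extracting a clean monic polynomial $P(u)$ from the minimal relation among the $\eta_k$ requires care in positive characteristic, since polynomial identities that are non-degenerate in characteristic zero can collapse modulo $p$ (for instance when distinct roots of a Drinfeld polynomial coincide after reduction). One must ensure that the minimal relation remains non-trivial and that its coefficients determine $P(u)$ unambiguously. Additionally, obtaining the sharper statement that the tensor product $M$ in the sufficiency step is itself already irreducible (rather than merely surjecting onto the simple head) would require extending Proposition \ref{tensor product irreducible} to the spectrally shifted modules $V[\gamma_i]$ with $\gamma_i \in \kk$, for which the ordering numerology must be re-examined.
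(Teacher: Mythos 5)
Your sufficiency argument follows the paper's broad outline (tensor products of two-dimensional evaluation-type modules, twist by $\lambda_2$), and your verification that each individual $V[\gamma]$ is a $\yp$-module --- including the cancellation $(w+1)^p-(w+1)=w^p-w$ --- is correct. However, there are two genuine gaps, one in each direction.

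\emph{Necessity.} After invoking Lemma \ref{exist f} you attempt to rederive $P(u)$ from a Tarasov-style recursion on $\eta_k=(f^{(1)})^k\xi$. In $\yp$ this recursion loses its force: $(f^{(1)})^p$ lies in $Z_p(Y_2)_+$, so $\eta_p=0$ automatically for every restricted $\lambda(u)$, and the ``minimal relation'' you wish to extract therefore no longer encodes $\lambda_1/\lambda_2$. The paper avoids this entirely by observing that although $f(u)\lambda_1(u)$, $f(u)\lambda_2(u)$ need not be restricted individually, their ratio $\lambda_1/\lambda_2$ is restricted; comparing the resulting identity $\prod_{i,j}(u+\alpha_i-j)=\prod_{i,j}(u+\beta_i-j)$ forces the roots of $\mu_1$ and $\mu_2$ to match up modulo $\FF_p$, after which one writes $P(u)=\prod_i(u+\beta_i)(u+\beta_i+1)\cdots(u+\alpha_i-1)$ explicitly. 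Without this restrictedness observation the existence of $P$ does not follow.

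\emph{Sufficiency.} You write $\yp\cdot v_0^{\otimes k}\subseteq M$, but this silently assumes $M=V[\gamma_1]\otimes\cdots\otimes V[\gamma_k]$ carries a $\yp$-action. The coproduct does not send $Z_p(Y_2)_+$ into $Y_2\otimes Z_p(Y_2)_+ + Z_p(Y_2)_+\otimes Y_2$, so a tensor product of $\yp$-modules is \emph{not} automatically a $\yp$-module; in particular $s_{2,1}(u)$ may fail to annihilate $v_0^{\otimes k}$. (The paper's Proposition \ref{L is restricted module} establishes the restricted structure on tensor products of evaluation modules only under the ordering hypothesis of Proposition \ref{tensor product irreducible}, via irreducibility; your $\gamma_i$ satisfy no such hypothesis.) This is precisely why the paper introduces $V=\mathrm{Span}\{s_{2,1}^{(r_1)}\cdots s_{2,1}^{(r_m)}\xi\}$ and applies the Engel--Jacobson theorem to produce $\eta$ with $s_{2,1}(u)\eta=0$; combined with the restrictedness of $\lambda_1/\lambda_2$, this guarantees that all the RTT $p$-central generators $s_{i,j}^{(r)}$ ($r\geq 1$) annihilate $\eta$, so that $Y_2\eta$ genuinely descends to a $\yp$-module receiving a surjection from the baby Verma module. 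Your proof needs this step before the conclusion about the simple head is legitimate. Your closing remark about irreducibility of $M$ is a red herring: the issue is not whether $M$ is simple, but whether $Y_2 v_0^{\otimes k}$ is a $\yp$-module at all.
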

\begin{proof}
Suppose that the representation $L^{[p]}(\lambda_1(u),\lambda_2(u))$ is finite dimensional.
Then by Lemma \ref{exist f} we can find a formal series $f(u)$ such that
$f(u)\lambda_1(u)=(1+\alpha_1u^{-1})\cdots(1+\alpha_ku^{-1})$
and $f(u) \lambda_2(u)=(1+\beta_1u^{-1})\cdots(1+\beta_ku^{-1})$ for some $k\geq 0$ and some scalars
$\alpha_i, \beta_i$.
Note that $\lambda_1(u)/\lambda_2(u)=f(u) \lambda_1(u)/f(u) \lambda_2(u)$ is restricted,
so that for each $\alpha_i$, there exists $\beta_j$ such that $\alpha_i-\beta_j\in\FF_p$.
Reordering them if necessary,
we may thus assume that  $\alpha_i-\beta_i\in\FF_p$.
The the polynomial 
\[
P(u)=\prod\limits_{i=1}^{k}(u+\beta_i)(u+\beta_i+1)\cdots(u+\alpha_i-1)
\]
satisfies \eqref{drinfeld poly}, which can be checked by a short computation.

Conversely, suppose \eqref{drinfeld poly} holds for a polynomial $P(u)=(u+\gamma_1)\cdots(u+\gamma_s)$.
Set 
\begin{eqnarray*}
\label{sausages}
\begin{array}{l}
 \mu_1(u): =(1+(\gamma_1+1)u^{-1})\cdots (1+(\gamma_s+1)u^{-1}),\\
\mu_2(u): =(1+\gamma_1 u^{-1})\cdots (1+\gamma_s u^{-1}).
\end{array}
\end{eqnarray*}
By construction we have
\begin{eqnarray}
\label{eggs}
\frac{\mu_1(u)}{\mu_2(u)} = \frac{P(u+1)}{P(u)}.
\end{eqnarray}
For each $\gamma_i$, we consider the $\mathfrak{gl}_2$-module $L(\gamma_i+1,\gamma_i)$ which is generated by $\xi_i$ and the module structure is given by
\begin{align}\label{2d module}
e_{1,1}\xi_i=(\gamma_i+1)\xi_i, e_{2,2}\xi_i=\gamma_i\xi_i, e_{1,2}\xi_i=0, e_{2,1}^2\xi_i=0.
\end{align}
In particular,  the module $L(\gamma_i+1,\gamma_i)$ is two dimensional with basis $\{\xi_i, e_{2,1}\xi_i\}$.
We can regard them as $Y_2$-modules via \eqref{ev hom} and consider the tensor product module
\[
L:=L(\gamma_1+1,\gamma_1)\otimes\cdots\otimes L(\gamma_s+1,\gamma_s).
\]
Clearly, $\dim_{\kk}L=2^s$.
We put $\xi:=\xi_1\otimes\cdots\otimes \xi_s$.
Using the comultiplication \eqref{hopfstructure of y2} in conjuntion with \eqref{2d module} one can show by direct computation that
\[
t_{1,2}(u)\xi=0, \, \, \, \, t_{1,1}(u)\xi=\mu_1(u)\xi, \, \, \, \, t_{2,2}(u)\xi=\mu_2(u)\xi
\]
and $t_{2,1}(u)$ acts nilpotently on the module $L$.  Let $M:=Y_2\xi\subseteq L$ be the cyclic submodule of $L$.
By twisting the action of $Y_2$ on $M$ by the automorphism \eqref{auto definition} with $f(u)=\mu_2(u)^{-1}$,
we obtain a $Y_2$-module which will be denoted $\widetilde{M}$.
Clearly, $\widetilde{M}$ is also generated by $\xi$ and, thanks to \eqref{drinfeld poly} and \eqref{eggs}, we have
\begin{eqnarray}
\label{urghpomple}
t_{1,2}(u)\xi=0, \, \, \, \,  t_{1,1}(u)\xi=\frac{\mu_1(u)}{\mu_2(u)}\xi=\frac{\lambda_1(u)}{\lambda_2(u)}\xi,\, \, \, \,  t_{2,2}(u)\xi=\xi.
\end{eqnarray}

We define a subspace $V$ of $\widetilde{M}$
\[
V:={\rm span}\{s_{2,1}^{(r_1)}s_{2,1}^{(r_2)}\cdots s_{2,1}^{(r_m)}\xi;~m\geq 0, r_1,r_2,\dots,r_m\geq 1\}.
\]
The set $\{s_{2,1}^{(r)};~r\geq 1\}$ is a Lie subset that acts on $V$ by nilpotent transformations.
Therefore the Engel-Jacobson theorem provides a vector $\eta\in V$ such that $s_{2,1}(u)\eta=0$.
Then we consider the submodule $Y_2\eta$ of the module $\widetilde{M}$.
By \eqref{urghpomple}, we obtain
\[
t_{1,2}(u)\eta=0, \, \, \, \, t_{1,1}(u)\eta=\frac{\lambda_1(u)}{\lambda_2(u)}\eta, \, \, \, \, t_{2,2}(u)\eta=\eta.
\]
There results a surjective homomorphism
\[Z^{[p]}(\frac{\lambda_1(u)}{\lambda_2(u)},1)\twoheadrightarrow Y_2\eta;~1_{\lambda_1(u)/\lambda_2(u)}\mapsto \eta\]
where $1_{\lambda_1(u)/\lambda_2(u)}$ is the cyclic generator. As a consequence, $L^{[p]}(\frac{\lambda_1(u)}{\lambda_2(u)},1)$ is finite dimensional.
Since $\lambda_2(u)$ is restricted,
we apply again the twisted action of $Y_2^{[p]}$  on $L^{[p]}(\frac{\lambda_1(u)}{\lambda_2(u)},1)$ with $f(u)=\lambda_2(u)$ (see Remark \ref{rk2}) to obtain the module $L^{[p]}(\lambda_1(u),\lambda_2(u))$.
We conclude that the module $L^{[p]}(\lambda_1(u),\lambda_2(u))$ is also finite dimensional.
\end{proof} 

\begin{Remark}
As observed by Kalinov ( \cite [Remark, p. 6985]{Kal20}), 
the  polynomial $P(u)$ in Theorem \ref{Drinfeld poly} is not unique.
Suppose that $Q(u)$ is another monic polynomial in $u$ and $\frac{P(u+1)}{P(u)}=\frac{Q(u+1)}{Q(u)}$.
It follows that $\frac{P(u)}{Q(u)}=: F(u)$ satisfies $F(u+1)=F(u)$.
Thus $F(u)$ is a ratio of products of expressions of the form $(u+\alpha)^p-(u+\alpha)$ for some $\alpha\in\kk$.
\end{Remark}

\begin{Remark}\label{ex1}
Let $\alpha\notin\FF_p$.
We consider the $2$-dimensional $\mathfrak{gl}_2$-module $L(\alpha+1,\alpha)$ as defined in \eqref{2d module}.
By twisting the action of $Y_2$ on  $L(\alpha+1,\alpha)$ by the automorphism \eqref{auto definition} with $f(u)=(1+\alpha u^{-1})^{-1}$, we obtain a module over $\yp$ which is isomorphic to the irreducible module
$L^{[p]}(\frac{1+(\alpha+1)u^{-1}}{1+\alpha u^{-1}},1)$.
However, there does not exist a restricted polynomial $g(u)$ in $u^{-1}$ such that $g(u)\frac{1+(\alpha+1)u^{-1}}{1+\alpha u^{-1}}$ is a restricted polynomial.
\end{Remark}

Denote by $L$ the tensor product \eqref{tensor product}.
We will now establish a criterion of irreducibility of the representations $L$.
For $\alpha,\beta\in\FF_p$, we define the {\em string} corresponding to the pair $(\alpha,\beta)$ as the set
\[
S(\alpha,\beta):=\left\{
\begin{array}{ll}
\{[\beta],[\beta]+1,\dots,[\alpha]-1\}&~\ \ \ \ \text{if}~~[\alpha]>[\beta],\\
\{[\beta],[\beta]+1,\dots,[\alpha]+p-1\}&~\ \ \ \ \text{if}~~[\alpha]<[\beta],\\
\emptyset &~\ \ \ \ \text{if}~[\alpha]=[\beta].
\end{array}
\right.\]
Note that we always view $S(\alpha,\beta)$ as a subset of $\FF_p:=\{\overline{0},\cdots,\overline{p-1}\}$.
By definition, we have $|S(\alpha,\beta)|=[\alpha-\beta]$, where $|S(\alpha,\beta)|$ is the cardinality of the set $S(\alpha,\beta)$.
\begin{Example}
Suppose that $p=7$.
If $(\alpha,\beta)=(1,3)$, then $S(\alpha,\beta)=\{\overline{3},\overline{4},\overline{5},\overline{6},\overline{0}\}$. 
\end{Example}
\begin{Definition}
Two strings $S_1$ and $S_2$ are {\em in general position} if either
\begin{itemize}
    \item[(i)] $S_1\cup S_2$ is not a string, or\medskip 
    \item[(ii)] $S_1\subseteq S_2$ or $S_2\subseteq S_1$.
\end{itemize}
\end{Definition}

The following result is adapted from \cite[Corollary 3.3.6]{Mol07}.
Since our definition about string is slightly different from that given in \cite[Page 115]{Mol07},
we provide a proof here.
\begin{Proposition}
\label{classify irreducible tensor prods}
The $\yp$-module $L$ is irreducible if and only if the strings $S(\alpha_1,\beta_1),\dots,S(\alpha_k,\beta_k)$ are pairwise in general position.
\end{Proposition}
\begin{proof}
Suppose that the strings are pairwise in general position and assume first that $[\alpha_1-\beta_1]\leq\cdots\leq [\alpha_k-\beta_k]$. Let $\alpha:=(\alpha_1,\dots,\alpha_k)$ and $\beta:=(\beta_1,\dots,\beta_k)$. One easily verifies that $(\alpha,\beta)$ is non-decreasing.
For example, let us prove that $[\alpha_1-\beta_1]\leq [\alpha_i-\beta_j]$ for $i,j\geq 1$.
We may assume that $i<j$.
As $[\alpha_1-\beta_1]\leq [\alpha_i-\beta_i]$,
it suffices to prove that $[\alpha_i-\beta_i]\leq [\alpha_i-\beta_j]$.
Since $S_i:=S(\alpha_i,\beta_i)$ and $S_j:=S(\alpha_j,\beta_j)$ are in general position
and $[\alpha_i-\beta_i]\leq [\alpha_j-\beta_j]$,
either $S_i\subseteq S_j$ or $S_i \cup S_j$ is not a string.
In either case, 
we always have $S(\alpha_i,\beta_j)\supseteq S_i$.
It follows that $[\alpha_i-\beta_j]=|S(\alpha_i,\beta_j)|\geq |S_i|=[\alpha_i-\beta_i]$.
Then Proposition \ref{tensor product irreducible} implies $L$ is irreducible.
Any permutation of the tensor factors yields an isomorphic representation by Proposition \ref{Prop:permutation is still irrdecible}.

Conversely, let $k=2$ and let $L(\alpha_1,\beta_1)\otimes L(\alpha_2,\beta_2)$ is irreducible.
Suppose that the strings $S(\alpha_1,\beta_1)$ and $S(\alpha_2,\beta_2)$ are not in general position.
Then the strings $S(\alpha_1,\beta_2)$ and $S(\alpha_2,\beta_1)$ are in general position.
Hence the representation $L(\alpha_1,\beta_2)\otimes L(\alpha_2,\beta_1)$ of $\yp$ is irreducible.
Note that both $L(\alpha_1,\beta_2)\otimes L(\alpha_2,\beta_1)$ and $L(\alpha_1,\beta_1)\otimes L(\alpha_2,\beta_2)$ are isomorphic to 
$L^{[p]}(\lambda_1(u),\lambda_2(u))$,
where $\lambda_1(u)=(1+\alpha_1u^{-1})(1+\alpha_2u^{-1})$ and $\lambda_2(u)=(1+\beta_1u^{-1})(1+\beta_2u^{-1})$.
Hence they have the same dimension:
\[
([\alpha_1-\beta_1]+1)([\alpha_2-\beta_2]+1)=([\alpha_1-\beta_2]+1)([\alpha_2-\beta_1]+1).
\]
It follows that 
\[
((\alpha_1-\beta_1)+1)((\alpha_2-\beta_2)+1)=((\alpha_1-\beta_2)+1)((\alpha_2-\beta_1)+1).
\]
This implies that $(\alpha_1-\alpha_2)(\beta_1-\beta_2)=0$,
and hence that $S(\alpha_1,\beta_1)$ and $S(\alpha_2,\beta_2)$ are in general position.
We arrive at a contradiction. For the general case.
Suppose that $L$ is irreducible, but a pair of strings $S(\alpha_i,\beta_i)$ and $S(\alpha_j,\beta_j)$ are not in general position.
Applying Proposition \ref{Prop:permutation is still irrdecible} and permuting the tensor factors of $L$ if necessaary, we may assume that $i$ and $j$ are adjacent.
However, the representation $L(\alpha_i,\beta_i)\otimes L(\alpha_j,\beta_j)$ is reducible as shown above.
This implies that $L$ is reducible, a contradiction.
\end{proof}

\section{Finite $W$-algebras}\label{Section:W-algebras}
Now we turn to the applications of our earlier results, which go via the theory of finite $W$-algebras.
We fix $n\in\ZZ_{\geq 1}$.
Let $G:=GL_{2n}(\kk)$ be the general linear group of degree $2n$ with Lie algebra $\fg:=\Lie(G)=\mathfrak{gl}_{2n}$.
We write $\{e_{i,j};~1\leq i,j\leq 2n\}$ for the standard basis of $\fg$ consisting of matrix units.
In this section, we only consider the $W$-algebras associated to $2\times n$-rectangular nilpotent elements in $\fg$.
The reader is referred to \cite{GT19-1} for the theory of modular finite $W$-algebras.

\subsection{Restricted finite $W$-algebras}\label{Restricted finite W-algebras}
We consider the partition $(n,n)\vdash 2n$ of $2n$. 
Let $\pi$ be the corresponding pyramid.
The boxes in the pyramid are numbered along rows from left to right and from top to bottom.
For example, if $n=5$, then the pyramid associated to $(5,5)$ is 
\begin{equation*}
\begin{array}{c}
\begin{picture}(65,26)
 \put(0,0){\line(1,0){65}}
\put(0,13){\line(1,0){65}} 
\put(0,26){\line(1,0){65}}
\put(0,0){\line(0,1){26}} 
\put(13,0){\line(0,1){26}}
\put(26,0){\line(0,1){26}} 
\put(39,0){\line(0,1){26}}
\put(52,0){\line(0,1){26}}
\put(65,0){\line(0,1){26}}
\put(3,14.5){\hbox{1}}
\put(16,14.5){\hbox{2}}
\put(29,14.5){\hbox{3}}
\put(42,14.5){\hbox{4}}
\put(55,14.5){\hbox{5}}
\put(3,2.5){\hbox{6}}
\put(16,2.5){\hbox{7}}
\put(29,2.5){\hbox{8}}
\put(42,2.5){\hbox{9}}
\put(52,2.5){\hbox{10}}
\end{picture}
\end{array}.
\end{equation*}
For $1\leq i\leq n$, we use the notation $i':=i+n$.
The box in $\pi$ containing $i$ is referred to as the $i$th box,
and let $\row(i)$ and $\col(i)$ denote the {\it row} and {\it column} numbers of the brick in which $i$ appears, respectively.
We therefore have $\row(i)=1, \row(i')=2$ and $\col(i)=\col(i')=i$ for every $1\leq i\leq n$.

The pyramid $\pi$ is used to determine the nilpotent element
\begin{align}\label{def e}
e:=\sum\limits_{1\leq i\leq n-1}e_{i,i+1}+\sum\limits_{1\leq i\leq n-1}e_{i',(i+1)'}\in \fg,
\end{align}
which has Jordan type $(n,n)$.
We call it {\it $2\times n$-rectangular nilpotent element}.

Consider the cocharacter $\mu:\kk^{\times}\rightarrow T\subseteq G$ defined by 
\[\mu(t)=\diag(t^{-1},t^{-2},\dots,t^{-n},t^{-1},t^{-2},\dots,t^{-n}),\]
where $T$ is the maximal torus of $G$ of diagonal matrices.
Using $\mu$ we define the $\ZZ$-grading
\begin{align}\label{z-grad}
\fg=\bigoplus\limits_{r\in\ZZ}\fg(r)\ \ \ {\rm where}\ \ \ \fg(r):=\{x\in\fg;~\mu(t)x=t^rx~{\rm for~all}~t\in\kk^{\times}\}.
\end{align}
Since the adjoint action of $\mu(t)$ on a matrix unit is given by $\mu(t)e_{i,j}=t^{\col(j)-\col(i)}e_{i,j}$,
we have $\fg(r)={\rm span}\{e_{i,j};~\col(j)-\col(i)=r\}$.

We define the following subalgebras of $\fg$:
\begin{align}\label{subalgebras}
\fp:=\bigoplus\limits_{r\geq 0}\fg(r),~\fh:=\fg(0),~{\rm and}~\fm:=\bigoplus\limits_{r<0}\fg(r).
\end{align}
Then $\fp$ is a parabolic subalgebra of $\fg$ with Levi factor $\fh$ and $\fm$ is the nilradical of the opposite
parabolic to $\fp$.
We see that $\fh$ is isomorphic to the direct sum of $n$ copies of $\mathfrak{gl}_2$.

The finite $W$-algebra $U(\fg, e)$ associated to $e$ is a subalgebra of $U(\fp)$ generated by elements denoted as follows
\begin{align}\label{w algebra generator}
\{d_i^{(r)};~1\leq i\leq 2, \, \,  r>0\}\cup\{e^{(r)}, f^{(r)};~r>0\}.
\end{align}
These elements were defined by remarkable formulas, given in \cite[\S 9]{BK08};
see also \cite[\S 4]{GT19}.
We note there is an abuse of notation as there generators of $U(\fg,e)$ have the same names as the generators for $Y_2$ given in \eqref{pbwbasis}, 
this overloading of notation will be justified in the next subsection.

Now let $\ft$ be the Lie algebra of $T$, and write $\{\epsilon_1,\dots,\epsilon_{2n}\}$ for the standard basis of $\ft^*$.
We define the weight $\eta\in\ft^*$ by
\begin{align}\label{def:eta}
\eta:=\sum\limits_{i=1}^n 2(i-n)(\epsilon_i+\epsilon_{i'}),
\end{align}
and we note that $\eta$ extends to a character of $\fp$.
For $e_{i,j}\in\fp$ define
\[
\tilde{e}_{i,j}:=e_{i,j}+\eta(e_{i,j}).
\]
Then by definition
\begin{equation} \label{e:Dir}
d_i^{(r)} := \sum_{s=1}^r (-1)^{r-s} \sum_{\substack{i_1,\dots,i_s \\ j_1,\dots,j_s}} (-1)^{|\{t=1,\dots,s-1 \mid \row(j_t) \le i-1\}|}
\tilde{e}_{i_1, j_1} \cdots \tilde{e}_{i_s, j_s} \in U(\fp)
\end{equation}
where the sum is taken over all $1 \le i_1,\dots,i_s,j_1,\dots,j_s \le 2n$ such that
\begin{itemize}
\item[(a)] $\col(j_1)-\col(i_1)+\dots+\col(j_s)-\col(i_s) + s = r$;
\item[(b)] $\col(i_t) \le \col(j_t)$ for each $t = 1,\dots,s$;
\item[(c)] if $\row(j_t) \ge i$, then $\col(j_t) < \col(i_{t+1})$ for each $t = 1,\dots,s-1$;
\item[(d)] if $\row(j_t) < i$ then $\col(j_t) \ge \col(i_{t+1})$ for each $t = 1,\dots,s-1$;
\item[(e)] $\row(i_1) = i$, $\row(j_s) = i$;
\item[(f)] $\row(j_t) = \row(i_{t+1})$ for each $t = 1,\dots,s-1$.
\end{itemize}
The expressions for the elements $e^{(r)}\in U(\fp)$ and $f^{(r)}\in U(\fp)$ are given
by similar formulas,
see \cite[\S 9]{BK08} or \cite[\S 4]{GT19} for more details.

Since $\fp$ is a restricted subalgebra of $\fg$.
We write $Z_p(\fp)_+$ for the ideal of $Z_p(\fp)$ generated by $\{x^p-x^{[p]};~x\in\fp\}$,
so the restricted enveloping algebra of $\fp$ is $U_0(\fp)=U(\fp)/U(\fp)Z_p(\fp)_+$ (see Section \ref{sec:reduced enveloping alg}).
Then the {\it restricted $W$-algerbra} is defined as
\[
U^{[p]}(\fg,e):=U(\fg,e)/(U(\fg,e)\cap U(\fp)Z_p(\fp)_+).
\] 
Since the kernel of the restriction of the projection $U(\fp)\twoheadrightarrow U_0(\fp)$ to $U(\fg,e)$ is 
$U(\fg,e)\cap U(\fp)Z_p(\fp)_+$,
we can identify $U^{[p]}(\fg,e)$ with the image of $U(\fg,e)$ in $U_0(\fp)$.

\subsection{$U^{[p]}(\fg,e)$ as a restricted truncated Yangian}
Let $I_{2,n}^{[p]}$ be the ideal of $\yp$ generated by $\{d_1^{(r)}+Z_p(Y_2)_{+};~r>n\}$.
The {\it restricted truncated Yangian $Y_{2,n}^{[p]}$} is defined to the quotient of $\yp$ by the ideal $I_{2,n}^{[p]}$ (\cite[(4.13)]{GT21}).
As before, we will use the same symbols $d_i^{(r)}, e^{(r)}, f^{(r)}$ for their canonical images in the quotient $Y_{2,n}^{[p]}$ and $U^{[p]}(\fg,e)$, respectively.
According to \cite[Theorem 1.1]{GT21},
the map from $Y_{2,n}^{[p]}$ to $U^{[p]}(\fg,e)$, determined by sending the generators $e^{(r)}, d_i^{(r)}, f^{(r)}$ of $Y_{2,n}^{[p]}$ to the generators \eqref{w algebra generator} of $U^{[p]}(\fg,e)$ with the same names, defined an isomorphism
\begin{align}\label{phip}
\phi^{[p]}:Y_{2,n}^{[p]}\rightarrow U^{[p]}(\fg,e).
\end{align}

\subsection{Irreducible representations for $Y_{2,n}^{[p]}$}
Recall from Theorem \ref{fd irreducible rep} each finite dimensional simple $\yp$-module has the form $L^{[p]}(\lambda_1(u),\lambda_2(u))$. 
Our next proposition determines for which of these simple modules the action of $\yp$ factors through the quotient $\yp \twoheadrightarrow Y_{2,n}^{[p]} $.

\begin{Proposition}\label{simple module y2np}
Let $\lambda(u)=(\lambda_1(u), \lambda_2(u))$ be restricted.
Then $L^{[p]}(\lambda(u))$ factors to a module for $Y_{2,n}^{[p]}$ if and only if $\lambda_1(u)$ and
$\lambda_2(u)$ are polynomials in $u^{-1}$ of degree $\le n$.
\end{Proposition}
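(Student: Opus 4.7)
The plan is as follows: $L^{[p]}(\lambda(u))$ factors through $Y_{2,n}^{[p]}$ if and only if every generator $d_1^{(r)}$ with $r>n$ of the ideal $I_{2,n}^{[p]}$ acts as zero on $L^{[p]}(\lambda(u))$. I will prove the two implications separately.

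For sufficiency, suppose $\lambda_1$ and $\lambda_2$ are restricted polynomials of degree at most $n$ in $u^{-1}$. By Theorem \ref{two poly are tensor}, $L^{[p]}(\lambda(u))$ is isomorphic to a tensor product $L(\alpha_1,\beta_1)\otimes\cdots\otimes L(\alpha_k,\beta_k)$ of $k\le n$ evaluation modules, padded by trivial factors $L(0,0)$ if $\deg\lambda_1\ne\deg\lambda_2$. On each evaluation module every $t_{i,j}^{(r)}$ vanishes for $r\ge 2$, so $t_{i,j}(u)$ acts as a polynomial of degree at most $1$ in $u^{-1}$. Applying the coproduct \eqref{comultiplication} iteratively, $t_{i,j}(u)$ acts on the tensor product as a polynomial of degree at most $k\le n$ in $u^{-1}$. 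In particular $d_1(u)=t_{1,1}(u)$ is polynomial of degree at most $n$, so $d_1^{(r)}$ annihilates $L^{[p]}(\lambda(u))$ for every $r>n$ and the action factors through $Y_{2,n}^{[p]}$.

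For necessity, suppose $L^{[p]}(\lambda(u))$ factors through $Y_{2,n}^{[p]}$. Since $Y_{2,n}^{[p]}\cong U^{[p]}(\gl_{2n},e)$ is finite-dimensional, so is $L^{[p]}(\lambda(u))$. Evaluating $d_1^{(r)}\cdot 1_{\lambda(u)}=\lambda_1^{(r)}1_{\lambda(u)}=0$ for $r>n$ shows that $\lambda_1$ is polynomial of degree at most $n$. For $\lambda_2$, I exhibit a lowest weight vector: dually to Lemma \ref{l0 nonzero}, the subspace of $L^{[p]}(\lambda(u))$ annihilated by all $f^{(s)}$ is nonzero by the nilpotence of each $f^{(s)}$, and is stable under the pairwise commuting operators $d_i^{(r)}$ by an induction on $r$ using the commutator relation \eqref{Drinfeld generators relation 2}. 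It therefore contains a joint eigenvector $\eta$ for the $d_i(u)$ with eigenvalues $\mu_i(u)$, and the hypothesis forces $\mu_1^{(r)}=0$ for $r>n$.

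The remaining ingredient is the weight identity $\mu_1(u)=\lambda_2(u)$ for any finite-dimensional irreducible $\yp$-module. This is verified directly on each evaluation module $L(\alpha,\beta)$: the lowest weight vector $e_{2,1}^{[\alpha-\beta]}\xi$ carries $e_{1,1}$-eigenvalue $\alpha-[\alpha-\beta]\equiv\beta$ and $e_{2,2}$-eigenvalue $\beta+[\alpha-\beta]\equiv\alpha$ modulo $p$, so its lowest weight is $(\beta,\alpha)$. The identity extends to tensor products of evaluation modules via the coproduct, and is preserved by any twist of the action by a restricted power series $g(u)$ since this multiplies both $\mu_1$ and $\lambda_2$ by $g(u)$. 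Since the construction in the proof of Theorem \ref{Drinfeld poly} realises every finite-dimensional irreducible $\yp$-module as a twist of a tensor product of evaluation modules, $\mu_1=\lambda_2$ holds in general; combined with $\mu_1^{(r)}=0$ for $r>n$ this yields $\lambda_2^{(r)}=0$, completing the proof. The main technical obstacle is the verification of $\mu_1=\lambda_2$ for non-polynomial restricted $\lambda(u)$, which requires one to carefully track the highest and lowest weights through the submodule, quotient and twist operations in the proof of Theorem \ref{Drinfeld poly}.
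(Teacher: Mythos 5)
Your sufficiency argument matches the paper's, and the first half of your necessity argument ($\lambda_1$ is a polynomial of degree at most $n$, because $d_1^{(r)}$ for $r>n$ generates $I_{2,n}^{[p]}$ and $d_1^{(r)}\zeta=\lambda_1^{(r)}\zeta$) is also fine. The gap is in your treatment of $\lambda_2$. You reduce it to the claim that the lowest weight $\mu_1(u)$ of a finite-dimensional irreducible $\yp$-module equals $\lambda_2(u)$, verify this on evaluation modules and their tensor products, and then assert it extends to the general case because ``the construction in the proof of Theorem~\ref{Drinfeld poly} realises every finite-dimensional irreducible $\yp$-module as a twist of a tensor product of evaluation modules.'' That is not what the construction there produces: it builds $L^{[p]}(\lambda_1/\lambda_2,1)$ as the simple head of a cyclic submodule $Y_2\eta$ of a twist of a tensor product of two-dimensional $\gl_2$-modules $L(\gamma_i+1,\gamma_i)$ with $\gamma_i\in\kk$ (not $\FF_p$, so these are not the evaluation modules of \S\ref{S:evaluation module}), and there is no control over how the lowest weight of $L^{[p]}(\lambda_1/\lambda_2,1)$ relates to the lowest weight of $\widetilde{M}$ after passing to the cyclic submodule $Y_2\eta$ and then to its simple quotient. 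You concede this yourself (``the main technical obstacle \dots''), so as written the necessity direction is incomplete.

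The paper sidesteps this entirely by observing that not only $t_{1,1}^{(r)}$ but also $t_{2,2}^{(r)}$ vanishes in $Y_{2,n}^{[p]}$ for $r>n$ (this is \cite[Corollary 3.6]{GT21}; it is a fact about the truncated algebra, not about a particular module). Combined with Proposition~\ref{another def of zpl}, which gives $t_{i,i}(u)\zeta=\lambda_i(u)\zeta$ on the generating vector $\zeta$ of $L^{[p]}(\lambda(u))$, this yields $\lambda_i^{(r)}=0$ for both $i=1,2$ and all $r>n$ in one stroke, treating $\lambda_1$ and $\lambda_2$ symmetrically. If you want to keep your lowest-weight route you would need to actually prove the identity $\mu_1=\lambda_2$ for arbitrary finite-dimensional irreducibles of $\yp$; but the simpler fix is to use the RTT generators and the cited vanishing of $t_{2,2}^{(r)}$ in the truncation, which eliminates the asymmetry that forced you to invent the lowest-weight argument in the first place.
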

\begin{proof}
Suppose that both $\lambda_1(u)$ and $\lambda_2(u)$ are restricted polynomials.
Thanks to Theorem \ref{two poly are tensor},
$L^{[p]}(\lambda(u))$ is isomorphic to some tensor product of evaluation modules which has at most $n$ tensor factors.
Using the comultiplication \eqref{hopfstructure of y2}, 
we see that every generator $t_{i,j}^{(r)}$ with $r>n$ acts on the module as the zero operator.
Note that $t_{1,1}^{(r)}=d_1^{(r)}$ \eqref{t1122 by d}.
 We thus obtain $I_{2,n}^{[p]}.L^{[p]}(\lambda(u))=(0)$.

On the other hand, Proposition \ref{another def of zpl} implies that $L^{[p]}(\lambda(u))$ is generated by $\zeta$ and $t_{i,i}(u)\zeta=\lambda_i(u)\zeta$.
Now our assertion follows from the fact that $t_{i,i}^{(r)}=0$ in $Y_{2,n}^{[p]}$ for $r>n$ (\cite[Corollary 3.6]{GT21}).
\end{proof}
In conjunction with the isomorphism $\phi^{[p]}$ from \eqref{phip}
this also shows the following:
\begin{Corollary}
The isomorphism classes of finite dimensional irreducible representations of the restricted $W$-algerbra $U^{[p]}(\fg,e)$ are parameterized by the set 
\[
\big\{(\lambda_1(u),\lambda_2(u));~\prod\limits_{j=1}^p\lambda_i(u-j+1)=1,~\deg\lambda_i(u)\leq n,~i=1,2\big\}.
\]
\end{Corollary}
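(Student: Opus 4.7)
The plan is to stitch together the isomorphism $\phi^{[p]}\colon Y_{2,n}^{[p]} \xrightarrow{\sim} U^{[p]}(\fg,e)$ of \eqref{phip} with Theorem \ref{fd irreducible rep} and Proposition \ref{simple module y2np}. Under $\phi^{[p]}$, the isomorphism classes of finite dimensional irreducible $U^{[p]}(\fg,e)$-modules are in natural bijection with those of $Y_{2,n}^{[p]}$, so the task reduces to classifying the latter.

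First I would invoke the general fact that simple modules over a quotient algebra $A/J$ are exactly the simples of $A$ on which $J$ acts by zero: the finite dimensional simple $Y_{2,n}^{[p]}$-modules correspond bijectively to finite dimensional simple $\yp$-modules on which the ideal $I_{2,n}^{[p]}$ acts trivially. By Theorem \ref{fd irreducible rep}, the latter are exactly the modules $L^{[p]}(\lambda(u))$ for restricted tuples $\lambda(u) = (\lambda_1(u), \lambda_2(u))$, and the remark just before Lemma \ref{l0 nonzero} shows that $\lambda(u)$ is uniquely recovered from the isomorphism class.

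Next I would apply Proposition \ref{simple module y2np} to record that $L^{[p]}(\lambda(u))$ descends to $Y_{2,n}^{[p]}$ if and only if both $\lambda_1(u)$ and $\lambda_2(u)$ lie in $1+u^{-1}\kk[u^{-1}]$ with degree at most $n$. Combining this with the definition of ``restricted'' (namely $\prod_{j=1}^p \lambda_i(u-j+1) = 1$ for $i=1,2$) gives precisely the parameter set appearing in the statement.

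I do not foresee any substantive obstacle; the argument is essentially a bookkeeping assembly of the preceding structural results. The only point worth briefly verifying is that every simple $Y_{2,n}^{[p]}$-module is automatically finite dimensional, so the finite dimensionality hypothesis built into Theorem \ref{fd irreducible rep} and Proposition \ref{simple module y2np} is not a restriction here. This is clear because $U^{[p]}(\fg,e)$, being the image of the finite dimensional restricted enveloping algebra $U_0(\fp)$, is itself finite dimensional, and hence so is $Y_{2,n}^{[p]}$ via $\phi^{[p]}$.
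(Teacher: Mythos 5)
Your argument is correct and is exactly the paper's reasoning: the paper simply states the corollary follows ``in conjunction with the isomorphism $\phi^{[p]}$'' from Proposition \ref{simple module y2np}, together with Theorem \ref{fd irreducible rep} and the uniqueness remark preceding Lemma \ref{l0 nonzero}. Your closing observation that finite dimensionality is automatic (since $U^{[p]}(\fg,e)$, hence $Y_{2,n}^{[p]}$, is finite dimensional) is a sensible bit of bookkeeping the paper leaves implicit.
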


\section{Applications: representations of the general linear algebra}
\label{Section:modforREA}
In this section, we continue to use the notation from Section \ref{Section:W-algebras}.
We will determine the irreducible modules for the reduced enveloping algebra of $\fg$ associated with the $2\times n$-rectangular nilpotent element.
\subsection{Baby Verma modules for $U^{[p]}(\fg,e)$}
We recall that the grading $\fg=\bigoplus_{i\in\ZZ}\fg(i)$ from \eqref{z-grad} and the notation $\fh=\fg(0)$ and $\fp=\bigoplus_{i\geq 0}\fg(i)$ from \eqref{subalgebras}.
Note that $\fh$ is reductive and is isomorphic to the $n$ copies of $\gl_2$.
We let $\fb_{\fh}$ be the Borel subalgebra of $\fh$ with basis 
\begin{align}\label{basis bh}
 \{e_{i,i};~1\leq i\leq n\}\cup\{e_{i',i'};~1\leq i\leq n\}\cup\{e_{i,i'};~1\leq i\leq n\},
\end{align}
which is the direct sum of the Borel subalgebras of upper triangular matrices in each of the $\gl_2$ summands of $\fh$.
  
Given two $n$-tuples $\alpha:=(\alpha_1,\dots,\alpha_n)$ and $\beta:=(\beta_1,\dots,\beta_n)$ of elements of $\FF_p$,
we define the weight $\lambda_{\alpha,\beta}\in\ft^*$ by
\[
\lambda_{\alpha,\beta}:=\sum\limits_{i=1}^n\alpha_i\epsilon_i+\sum\limits_{i=1}^n\beta_i\epsilon_{i'}.
\]
We define $\kk_{\alpha,\beta}=\kk.1_{\alpha,\beta}$ to be the $1$-dimensional $\ft$-module on which $\ft$ acts via 
$\lambda_{\alpha,\beta}-\eta$, where we recall that $\eta$ is defined in \eqref{def:eta}.
It is obvious that $\kk_{\alpha,\beta}$ is a $U_0(\ft)$-module.
Furthermore, 
we view it as module for $U_0(\fb_{\fh})$ on which the nilradical acts trivially.
Then we define the {\it baby Verma module} 
\[Z_{\fh}(\alpha,\beta):=U_0(\fh)\otimes_{U_0(\fb_{\fh})}\kk_{\alpha,\beta}.\]
We put $z_{\alpha,\beta}:=1\otimes 1_{\alpha,\beta}$.
We may view $Z_{\fh}(\alpha,\beta)$ as a $U_0(\fp)$-module on which the nilradical $\oplus_{i>0}\fg(i)$ of $\fp$ acts trivially.
As the restricted $W$-algebra $U^{[p]}(\fg,e)$ is a subalgebra of $U_0(\fp)$,
we restrict $Z_{\fh}(\alpha,\beta)$ to $U^{[p]}(\fg,e)$ and write $\overline{Z}_{\fh}(\alpha,\beta)$ for the restriction and $\overline{z}_{\alpha,\beta}$ for $z_{\alpha,\beta}$ viewed as an element of $\overline{Z}_{\fh}(\alpha,\beta)$.

\begin{Remark}\label{rk5}
In \cite{GT21}, the authors defined the weight $\rho_{\fh}$.
In our situation, the weight $\rho_{\fh}$ is just equal to $-\sum\limits_{i=1}^n\epsilon_{i'}$.
They defined $1$-dimensional $\ft$-module on which $\ft$ acts via 
$\lambda_{\alpha,\beta}-\eta-\rho_{\fh}$.
Equivalently, the $n$-tuple $\beta$ is replaced by $(\beta_1-1,\dots,\beta_n-1)$.
\end{Remark}

We let $e_r$ denote the $r$th elementary symmetric polynomial.
The proof of the following result is based on \cite[Lemma 5.6]{GT21}.
\begin{Lemma}\label{ed act on baby verma}
Let $\alpha:=(\alpha_1,\dots,\alpha_n)$ and $\beta:=(\beta_1,\dots,\beta_n)$ be elements of $\FF_p$. Then
\begin{enumerate}
\item[(a)] $e^{(r)} \overline{z}_{\alpha,\beta}=0$ for all $r>0$;
\item[(b)] $d_1^{(r)} \overline{z}_{\alpha,\beta}=e_r(\alpha_1,\dots,\alpha_n)\overline{z}_{\alpha,\beta}$ for all $0<r\leq n$; and 
\item[(c)] $d_2^{(r)} \overline{z}_{\alpha,\beta}=e_r(\beta_1,\dots,\beta_n)\overline{z}_{\alpha,\beta}$ for all $0<r\leq n$.
\end{enumerate}
\end{Lemma}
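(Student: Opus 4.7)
The plan is to apply \eqref{e:Dir} and the analogous BK formula for $e^{(r)}$ (\cite[\S 9]{BK08}) directly to $\overline{z}_{\alpha,\beta}$, using the $\fh$-module decomposition $\overline{Z}_\fh(\alpha,\beta) \cong \bigotimes_{k=1}^n Z_k$, where $Z_k$ is the baby Verma for the $\gl_2^{(k)}$-block of $\fh = \bigoplus_k \gl_2^{(k)}$ and $\overline{z}_{\alpha,\beta} = \bigotimes_k z_k$ with $z_k$ the $\gl_2^{(k)}$-highest weight vector. First I will record the two key annihilation properties: $\overline{z}_{\alpha,\beta}$ is killed by the nilradical $\fn_\fp = \bigoplus_{i>0}\fg(i)$ of $\fp$ and by the nilradical $\fn_{\fb_\fh}^+ = \mathrm{span}\{e_{k,k'}\mid 1 \leq k \leq n\}$ of $\fb_\fh$. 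Using $\eta(e_{k,k}) = \eta(e_{k',k'}) = 2(\col(k)-n)$ and that $\overline{z}_{\alpha,\beta}$ has $\ft$-weight $\lambda_{\alpha,\beta} - \eta$, one checks $\tilde{e}_{k,k}\overline{z}_{\alpha,\beta} = \alpha_{\col(k)}\overline{z}_{\alpha,\beta}$ if $\row(k) = 1$ and $\tilde{e}_{k,k}\overline{z}_{\alpha,\beta} = \beta_{\col(k)}\overline{z}_{\alpha,\beta}$ if $\row(k) = 2$; the $\ft$-weights appearing in $\overline{Z}_\fh(\alpha,\beta)$ are precisely $\lambda_{\alpha,\beta} - \eta - \sum_k m_k(\epsilon_k - \epsilon_{k'})$ with $0 \leq m_k \leq p-1$.

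For part (a), I will inspect each monomial $\tilde{e}_{i_1,j_1}\cdots\tilde{e}_{i_s,j_s}$ in the BK formula for $e^{(r)}$. Monomials containing a column-shifting factor in $\fn_\fp$ may be PBW-reordered (using that $\fn_\fp$ is an ideal of $\fp$) to sums of terms each retaining an $\fn_\fp$-factor on the right, so they annihilate $\overline{z}_{\alpha,\beta}$. The remaining column-preserving monomials lie in $U(\fh)$ and carry $\ft$-weight $\sum_k n_k(\epsilon_k - \epsilon_{k'})$ with $\sum_k n_k = 1$, a consequence of the row-chain condition $\row(i_1) = 1$, $\row(j_s) = 2$, $\row(j_t) = \row(i_{t+1})$. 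Applied to $\overline{z}_{\alpha,\beta}$, such a monomial lands in the weight space $\lambda_{\alpha,\beta} - \eta + \sum n_k(\epsilon_k - \epsilon_{k'})$, which cannot equal $\lambda_{\alpha,\beta} - \eta - \sum m_k(\epsilon_k - \epsilon_{k'})$ for any $m_k \geq 0$ since $\sum n_k = 1$ rules out $n_k \leq 0$ throughout; hence the contribution is zero.

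For parts (b) and (c) I will use that $d_i^{(r)}$ has $\ft$-weight zero, so $d_i^{(r)}\overline{z}_{\alpha,\beta}$ is a scalar multiple of $\overline{z}_{\alpha,\beta}$ and the scalar can be read from \eqref{e:Dir}. Non-column-preserving and nonzero-weight column-preserving monomials both vanish, as in (a). Only the column-preserving monomials of $\ft$-weight zero — equivalently, those with equal numbers of $(1,2)$- and $(2,1)$-type factors at each column — contribute. The principal monomials (all $i_t = j_t$ in row $i$, at strictly increasing columns forced by conditions (a)--(f) of \eqref{e:Dir}) have both sign factors $(-1)^{r-s}$ and $(-1)^{|\{t:\row(j_t)\leq i-1\}|}$ equal to $+1$, and by the preliminary computation they sum to exactly $e_r(\alpha_1,\dots,\alpha_n)\overline{z}_{\alpha,\beta}$ when $i=1$ and $e_r(\beta_1,\dots,\beta_n)\overline{z}_{\alpha,\beta}$ when $i=2$. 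The remaining task is to show that the non-principal weight-zero contributions cancel in total.

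The main obstacle will be this final cancellation. Using the col-$k$ sub-product acting on $z_k$ as a $\gl_2^{(k)}$-baby Verma, a non-principal monomial contributes nonzero only when, at each column, the $(1,2)$- and $(2,1)$-factors form a Dyck-type pattern in the position ordering, in which case the col-$k$ sub-product produces $(\alpha_k - \beta_k)$-type scalars on $z_k$. The sign prescription $(-1)^{|\{t:\row(j_t)\leq i-1\}|}$ in \eqref{e:Dir} is tailored precisely to cancel these residual contributions, and a careful bookkeeping argument paralleling \cite[Lemma 5.6]{GT21} — with the re-parameterization noted in Remark~\ref{rk5} accounting for the absence of the $\rho_\fh$-shift — will complete the proof.
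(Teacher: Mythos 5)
Your approach (direct evaluation of the BK formula \eqref{e:Dir} and its $e^{(r)}$-analogue on $\overline{z}_{\alpha,\beta}$) differs from the paper's, and it contains a genuine gap at the key step. For part (a) the weight argument is sound: a column-shifting factor pushes the monomial into the two-sided ideal $U(\fp)\fn_\fp = \fn_\fp U(\fp)$, which kills $\overline{z}_{\alpha,\beta}$, and the remaining column-preserving monomials in the $e^{(r)}$-formula carry net raising weight $\sum_k n_k(\epsilon_k-\epsilon_{k'})$ with $\sum_k n_k = 1$, forcing some $n_k > 0$ and hence landing outside the weight support of the baby Verma module. For parts (b) and (c), however, you identify the principal (diagonal) monomials and then assert that the non-principal weight-zero contributions ``cancel in total,'' calling it ``a careful bookkeeping argument paralleling \cite[Lemma 5.6]{GT21}.'' That cancellation is precisely the entire computational content of (b) and (c); asserting it is not proving it, and you do not carry it out.

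The paper sidesteps this by a clean reduction that your proposal misses. One introduces the ordinary Verma module $M_\fh(\alpha,\beta) := U(\fh)\otimes_{U(\fb_\fh)}\kk_{\alpha,\beta}$, inflates it to $U(\fp)$ with the nilradical acting by zero, and restricts to $U(\fg,e)$. The natural surjection $M_\fh(\alpha,\beta)\twoheadrightarrow Z_\fh(\alpha,\beta)$ sending $m_{\alpha,\beta}\mapsto z_{\alpha,\beta}$ is $U(\fp)$-equivariant, hence $U(\fg,e)$-equivariant, so the identities $e^{(r)}\overline{m}_{\alpha,\beta}=0$ and $d_i^{(r)}\overline{m}_{\alpha,\beta}=e_r(\cdot)\overline{m}_{\alpha,\beta}$, which are proved in \cite[Lemma 5.6]{GT21} (after the shift by $\rho_\fh$ explained in Remark~\ref{rk5}), push forward along this map to give the statement for $\overline{z}_{\alpha,\beta}$ without any new computation. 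If you wish to keep your direct approach, you must actually write out the cancellation — essentially reproving \cite[Lemma 5.6(b)]{GT21} from scratch — rather than deferring to it at exactly the step where the work is.
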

\begin{proof}
Instead of the construction of $Z_{\fh}(\alpha,\beta)$,
we define the Verma module $M_{\fh}(\alpha,\beta):=U(\fh)\otimes_{U(\fb_{\fh})}\kk_{\alpha,\beta}$ and write 
$m_{\alpha,\beta}:=1\otimes 1_{\alpha,\beta}$.
We can inflate it to a $U(\fp)$-module and then restrict it to $U(\fg,e)\subseteq U(\fp)$.
We write  $\overline{M}_{\fh}(\alpha,\beta)$ for the restriction and $\overline{m}_{\alpha,\beta}$ for $m_{\alpha,\beta}$ viewed as an element of $\overline{M}_{\fh}(\alpha,\beta)$.
There is a surjective homomorphism $M_{\fh}(\alpha,\beta)\twoheadrightarrow Z_{\fh}(\alpha,\beta);~m_{\alpha,\beta}\mapsto z_{\alpha,\beta}$ of $U(\fp)$-modules. As $U(\fg,e)\subseteq U(\fp)$, this gives a surjective homomorphism
\[\overline{M}_{\fh}(\alpha,\beta)\twoheadrightarrow \overline{Z}_{\fh}(\alpha,\beta);~\overline{m}_{\alpha,\beta}\mapsto \overline{z}_{\alpha,\beta}\]
of $U(\fg,e)$-modules.
Now \cite[Lemma 5.6(a)]{GT21} and its proof imply (a).
For (b) and (c), this follows from  \cite[Lemma 5.6(b)]{GT21} in conjunction with the earlier observation (Remark \ref{rk5}).
\end{proof}

We denote by $L_{\fh}(\alpha,\beta)$ the unique simple quotient of the baby Verma module  
$Z_{\fh}(\alpha,\beta)$ (see \cite[10.2]{Jan97}).
Recall that $\fh\cong\gl_2^{\oplus n}$.
For $1\leq i\leq n$, 
we write $\fg_i$ for the $i$th $\gl_2$ corresponding to the $i$th column.
It follows that $\fg_i$ has basis $\{e_{i,i}, e_{i',i'}, e_{i,i'}, e_{i',i}\}$.
For each $i$,
we have $(\lambda_{\alpha,\beta}-\eta)(e_{i,i})=\alpha_i+2(n-i)$ and 
$(\lambda_{\alpha,\beta}-\eta)(e_{i',i'})=\beta_i+2(n-i)$ .
Consequently, we obtain
\begin{align}\label{lhab}
L_{\fh}(\alpha,\beta)\cong L(\alpha_1+2(n-1),\beta_1+2(n-1))\otimes\cdots\otimes L(\alpha_n,\beta_n),
\end{align}
where $L(\alpha_i+2(n-i),\beta_1+2(n-i))$ is the irreducible $U_0(\fg_i)$-module
of dimension $[\alpha_i-\beta_i]+1$ (see Section \ref{S:evaluation module}).
As before, we restrict $L_{\fh}(\alpha,\beta)$ to $U^{[p]}(\fg,e)$ and write $\overline{L}_{\fh}(\alpha,\beta)$ for the restriction. 
We denote by $\overline{l}_{\alpha,\beta}$ the image of  $\overline{z}_{\alpha,\beta}$ in $\overline{L}_{\fh}(\alpha,\beta)$.
Also, we can view $\overline{L}_{\fh}(\alpha,\beta)$ as a $Y_{2,n}^{[p]}$-module via the isomorphism \eqref{phip}.

Given two $n$-tuples $\alpha:=(\alpha_1,\dots,\alpha_n)$ and $\beta:=(\beta_1,\dots,\beta_n)$ of elements of $\FF_p$, 
we define
\begin{eqnarray}
\label{ham}
\begin{array}{l}
\lambda_1(u)=(1+\alpha_1u^{-1})\cdots (1+\alpha_n u^{-1}),\\
\lambda_2(u)=(1+\beta_1u^{-1})\cdots (1+\beta_n u^{-1}).
\end{array}
\end{eqnarray}

%Recall that $a \in\FF_p$, we denote by $[a]$ the minimal non-negative integer such that $[a]\equiv a~(\Mod p)$.

Recall that the notion of a non-decreasing pair $(\alpha, \beta)$ was defined in Section~\ref{S:evaluation module}.

\begin{Theorem}\label{labconglhab}
%Given two $n$-tuples $\alpha:=(\alpha_1,\dots,\alpha_n)$ and $\beta:=(\beta_1,\dots,\beta_n)$ of of elements of $\FF_p$,  and let $\lambda_\alpha(u)$ and $\lambda_\beta(u)$ be as per \eqref{ham}.
%Suppose that for every $i=1,\dots,n$ the integer
%$[\alpha_i-\beta_i]$ is minimal amongst all $[\alpha_j-\beta_l]$ for $i\leq j,l\leq n$.
Let $\alpha, \beta$ be as above. If $(\alpha, \beta)$ is non-decreasing
then the irreducible module $L^{[p]}(\lambda(u)) = L^{[p]}(\lambda_1(u), \lambda_2(u))$ is isomorphic to $\overline{L}_{\fh}(\alpha,\beta)$.
\end{Theorem}
\begin{proof}
Thanks to Lemma \ref{ed act on baby verma},
there is a well-defined homomorphism
\[
Z^{[p]}(\lambda_1(u),\lambda_2(u))\rightarrow \overline{L}_{\fh}(\alpha,\beta);~1_{(\lambda_1(u),\lambda_2(u))}\mapsto \overline{l}_{\alpha,\beta}
\]
of $\yp$-modules.
We let $\yp.\overline{l}_{\alpha,\beta}$ be the cyclic submodule of 
$\overline{L}_{\fh}(\alpha,\beta)$ generated by $\overline{l}_{\alpha,\beta}$.
As $L^{[p]}(\lambda_1(u), \lambda_2(u))$ is the simple quotient of 
$Z^{[p]}(\lambda_1(u),\lambda_2(u))$, there results a surjective homomorphism
\begin{align}\label{surjhom}
\overline{L}_{\fh}(\alpha,\beta)\supseteq Y_2^{[p]} \overline{l}_{\alpha,\beta}\twoheadrightarrow L^{[p]}(\lambda_\alpha(u), \lambda_\beta(u)).
\end{align}
A consecutive application of Theorem \ref{two poly are tensor} and Proposition \ref{tensor product irreducible}
implies
\[
L^{[p]}(\lambda_1(u), \lambda_2(u))\cong L(\alpha_1,\beta_1)\otimes\cdots\otimes L(\alpha_n,\beta_n)
\]
as $\yp$-modules. We note that the evaluation module $L(\alpha_i,\beta_i)$ has dimension $[\alpha_i-\beta_i]+1$ (see Subsection \ref{S:evaluation module}).
As a result,
 $\dim_\kk L^{[p]}(\lambda_1(u), \lambda_2(u))=([\alpha_1-\beta_1]+1)\cdots([\alpha_n-\beta_n]+1).$
Moreover,  by \eqref{lhab} one has $\dim L^{[p]}(\lambda_1(u), \lambda_2(u))=\dim\overline{L}_{\fh}(\alpha,\beta)$ and \eqref{surjhom} ensures that 
$L^{[p]}(\lambda_1(u), \lambda_2(u))\cong\overline{L}_{\fh}(\alpha,\beta)$.
\end{proof}

\subsection{Simple $U_\chi(\fg)$-modules}
\label{S:simple Uchig mods}
We pick $e\in\fg$ the $2\times n$-rectangular nilpotent element from \eqref{def e}.
Let $(\cdot,\cdot):\fg\times \fg\rightarrow\kk;~(x,y)\mapsto {\rm tr}(xy)$ denote the trace from associated to the natural representation of $\fg$.
Let $\chi\in\fg^*$ be the element dual to $e$ via the trace from $(\cdot,\cdot)$,
i.e. $\chi=(e,\cdot)$.
The reduced enveloping algebra is defined by
\[
U_\chi(\fg)=U(\fg)/J_{\chi}=U(\fg)/(x^p-x^{[p]}-\chi(x)^p;~x\in\fg).
\]
To describe the simple $U_\chi(\fg)$-modules,
we require some notations.
Since $e\in\fg(1)$, we have that $\chi$ vanishes on $\fg(k)$ for $k\neq -1$.
Therefore $\chi$ restricts to a character of $\fm$, 
so that $\chi$ defines a one dimensional representation $\kk_{\chi}=\kk.1_{\chi}$ of $U_\chi(\fm)$.
Following Premet \cite{Pre02} we define the {\it restricted Gelfand–Graev module} to be
\begin{align}\label{Qchi}
Q^{\chi}:=U_\chi(\fg)\otimes_{U_\chi(\fm)}\kk_\chi\cong U_\chi(\fg)/U_{\chi}(\fg)\fm_{\chi},
\end{align}
where we recall that $\fm_\chi=\{x-\chi(x);~x\in\fm\}$. 
Note that $Q^{\chi}$ is a left $U_\chi(\fg)$-module and a right $U^{[p]}(\fg,e)$-module.

Premet's equivalence is a Morita equivalence between $U^{[p]}(\fg,e)$-{\rm mod} and $U_\chi(\fg)$-{\rm mod} (see \cite[Theorem 2.4]{Pre02}, \cite[Proposition 4.1]{Pre07} and \cite[Theorem 2.4]{GT21}). More precisely, the functor from $U^{[p]}(\fg,e)$-{\rm mod} to $U_\chi(\fg)$-{\rm mod} given by
\[M\mapsto Q^{\chi}\otimes_{U^{[p]}(\fg,e)}M\]
is an equivalence of categories, with quasi-inverse given by $V\mapsto V^{\fm_\chi}:=\{v\in V;~\fm_\chi v=(0)\}$. Furthermore, $Q^\chi$ is a free right $U^{[p]}(\fg,e)$-module.

Taking into account Theorem \ref{labconglhab}, we obtain our final result:
\begin{Theorem}
\label{T:modgllofa}
%Let $\alpha:=(\alpha_1,\dots,\alpha_n)$ and $\beta:=(\beta_1,\dots,\beta_n)$ be two $n$-tuples of elements of $\FF_p$, such that $(\alpha, \beta)$ is non-decreasing. Then
\begin{enumerate}
\setlength{\itemsep}{4pt}
\item Every $U_\chi(\gl_{2n})$-module is isomorphic to one of the modules
$$\{ Q^{\chi}\otimes_{U^{[p]}(\fg,e)}\overline{L}_{\fh}(\alpha,\beta);~ \alpha, \beta \in \FF_p^n, \, \, (\alpha, \beta) \text{ non-decreasing}\}.$$
\item If $(\alpha, \beta)$ and $(\alpha', \beta')$ are both non-decreasing, they give rise to $(\lambda_1(u), \lambda_2(u))$ and $(\lambda_1'(u), \lambda_2'(u))$ in accordance with \eqref{ham}. Then the following are equivalent:\vspace{4pt}
\begin{itemize}
\item[(i)] $Q^{\chi}\otimes_{U^{[p]}(\fg,e)}\overline{L}_{\fh}(\alpha,\beta) \cong Q^{\chi}\otimes_{U^{[p]}(\fg,e)}\overline{L}_{\fh}(\alpha',\beta')$;\vspace{4pt}
\item[(ii)] $\lambda_1(u) = \lambda_1'(u)$ and $\lambda_2(u) = \lambda_2'(u)$.\vspace{4pt}
\end{itemize}
In particular, simple $U_\chi(\gl_{2n})$-modules are parametrised by pairs of restricted polynomials of degree $n$.
\item $Q^{\chi}\otimes_{U^{[p]}(\fg,e)}\overline{L}_{\fh}(\alpha,\beta)$
is a simple $U_\chi(\fg)$-module with
$$\dim_{\kk}Q^{\chi}\otimes_{U^{[p]}(\fg,e)}\overline{L}_{\fh}(\alpha,\beta)=p^{2n^2-2n}([\alpha_1-\beta_1]+1)\cdots([\alpha_n-\beta_n]+1).$$
%\item % As we vary over pairs $(\alpha, \beta)$
%The isomorphism classes of simple $U_\chi(\fg)$-modules are parameterized by the set 
%\[
%\big\{(\lambda_1(u),\lambda_2(u));~\prod\limits_{j=1}^p\lambda_i(u-j+1)=1,~\deg\lambda_i(u)\leq n,~i=1,2\big\}.
%\]

\end{enumerate}
\end{Theorem}

It would be very interesting to relate this parameterisation to the one appearing in \cite[Proposition~10.5]{Jan97}.

\bigskip

\end{document}